\theoremstyle{thmit} 
\newtheorem{thm}{Theorem}[section]
\newtheorem{lem}[thm]{Lemma}
\theoremstyle{definition}
\newtheorem{remark}[thm]{Remark}
\newtheorem{example}[thm]{Example}
\numberwithin{equation}{section}
\begin{document} 

\title[Determinant formulas of Giambelli-type]{
New determinant formulas of Giambelli-type for Schur multiple zeta-functions
and their applications
}
\author {Kohji Matsumoto}
\author{Maki Nakasuji}

\subjclass[2020]{Primary 11M32; Secondary 05E05}

\keywords{
Schur multiple zeta-function; Giambelli formula}
\thanks{This work was supported by Japan Society for the Promotion of Science, Grant-in-Aid for Scientific Research No. 22K03267 (K. Matsumoto)
and No. 22K03274 (M. Nakasuji).}

\maketitle

\begin{abstract}
In this article, we will prove the Giambelli formula for Schur multiple zeta-functions of extended shape which we call laced type,
using the combinatorial method of proving the Giambelli formula for Schur function by  E\v{g}ecio{\v g}lu and Remmel. 
Further we will obtain the Giambelli formula for Schur multiple zeta-functions of a certain skew type via the antipode on the set of quasi-symmetric functions.
Combining these two Giambelli-type formulas, we will have new identities among Schur multiple zeta-functions.
\end{abstract}

\bigskip
\section{Introduction}\label{sec-1}

Denote by $\mathbb{N}$, $\mathbb{C}$ be the set of all positive integers, and of all
complex numbers, respectively.

Let $\lambda=(\lambda_1,\ldots,\lambda_m)$ be a partition of $n\in\mathbb{N}$, 
and let $T(\lambda,X)$ the set of all Young tableaux of shape
$\lambda$ over a set $X$.    Let ${\rm SSYT}(\lambda)\subset T(\lambda,\mathbb{N})$ be 
the set of all
semi-standard Young tableaux of shape $\lambda$.
We write $M=(m_{ij})\in {\rm SSYT}(\lambda)$, where $m_{ij}$ denotes the number in the
$(i,j)$-component of the tableaux.

The Schur multiple zeta-function associated with $\lambda$, introduced by
Nakasuji, Phuksuwan and Yamasaki \cite{NPY18}, is defined by
\begin{align}\label{1-1}
\zeta_{\lambda}({\pmb s})=\sum_{M\in {\rm SSYT}(\lambda)}\frac{1}{M^{\pmb s}}=\sum_{M\in {\rm SSYT}(\lambda)}
w(s, M),
\end{align}
where ${\pmb s}=(s_{ij})\in T(\lambda,\mathbb{C})$ and $w(s, M)=\dfrac{1}{M^{\pmb s}}$ with
$$
M^{\pmb s}=\prod_{(i,j)\in\lambda}m_{ij}^{s_{ij}}\qquad
(M=(m_{ij})\in {\rm SSYT}(\lambda)).
$$

The series \eqref{1-1} converges absolutely if ${\pmb s}\in W_{\lambda}$ where 
\[
  W_\lambda =
\left\{{\pmb s}=(s_{ij})\in T(\lambda,\mathbb{C})\,\left|\,
\begin{array}{l}
 \text{$\Re(s_{ij})\ge 1$ for all $(i,j)\in \lambda \setminus C(\lambda)$ } \\[3pt]
 \text{$\Re(s_{ij})>1$ for all $(i,j)\in C(\lambda)$}
\end{array}
\right.
\right\}
\]
 with $C(\lambda)$ being the set of all corners of $\lambda$.

In \cite{NPY18}, several determinant expressions of $\zeta_{\lambda}({\pmb s})$ were
established.    In particular, here we quote two important determinant formulas for
$\zeta_{\lambda}({\pmb s})$.

For a partition $\lambda$, let $\lambda'=(\lambda_1',\ldots,\lambda'_{m'})$ be its
conjugate, and
we define two sequences of indices
$p_1, \ldots, p_N$ and $q_1, \ldots, q_N$ by $p_i=\lambda_i-i$ and $q_i=\lambda'_i-i$ for $1\leq i \leq N$
where $N$ is the number of the main diagonal entries of the Young diagram of $\lambda$.
We sometimes write $\lambda=(p_1, \ldots, p_N | q_1, \ldots, q_N)$, 
the Frobenius notation of $\lambda$ (see \cite[Section 1.1]{Mac95}).

Let 
$$T^{\rm{diag}}(\lambda,\mathbb{N})=\{T=(t_{ij})\in T(\lambda,\mathbb{N})\,|\,\text{$t_{ij}=t_{lm}$ if $j-i=m-l$}\}$$
and
$$W_{\lambda}^{\rm{diag}}=\{{\pmb s}=(s_{ij})\in W_{\lambda}\,|\,\text{$s_{ij}=s_{lm}$ if $j-i=m-l$}\},$$
that is, all the variables on the same diagonal lines (from the upper left to the
lower right) are the same.  
When ${\pmb s}\in W_{\lambda}^{\rm{diag}}$, we can introduce new variables 
$\pmb{z}=\{z_k\}_{k\in\mathbb{Z}}$ by the condition $s_{ij}=z_{j-i}$ (for all $i,j$),
and we may regard $\zeta_{\lambda}({\pmb s})$ as a function in variables 
$\{z_k\}_{k\in\mathbb{Z}}$.
We call the Schur multiple zeta-function associated with $\{z_k\}$ {\it content-parametrized Schur multiple zeta-function}, since $j-i$ is named ``content''.

Next, define the $r$-fold multiple zeta-function of Euler-Zagier type as
$$
\zeta_{EZ,r}(s_1,\ldots,s_r)=\sum_{0<m_1<\cdots<m_r}\frac{1}{m_1^{s_1}\cdots m_r^{s_r}}.
$$
This is nothing but the Schur multiple zeta-function attached to
$\lambda=(\underbrace{1, 1, \ldots, 1}_r)=(1^r)$.    The Jacobi-Trudi formula for $\zeta_{\lambda}(\pmb{s})$ is as
follows.

\begin{thm}\label{JT}
{\rm (\cite[Theorem 1.1 (2)]{NPY18})}
Let ${\pmb s}\in W_{\lambda}^{\rm{diag}}$. 
Then
\begin{align}\label{JT-formula}
\zeta_{\lambda}(\pmb{s})=\det(\zeta_{EZ,\lambda'_i-i+j}(z_{-j+1},z_{-j+2},\ldots,
z_{-j+(\lambda'_i-i+j)})_{1\leq i,j\leq m'},
\end{align}
here we understand that $\zeta_{EZ,\lambda'_i-i+j}(\cdots)=1\; ({\rm resp.} =0)$ if 
$\lambda'_i-i+j=0 \;({\rm resp.} <0)$.
\end{thm}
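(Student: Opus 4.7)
The plan is to adapt the Lindström--Gessel--Viennot (LGV) proof of the classical dual Jacobi--Trudi formula $s_\lambda = \det(e_{\lambda'_i - i + j})$ to weighted lattice paths whose weights are Dirichlet series rather than formal monomials. The crucial reason this is viable on $W_\lambda^{\rm diag}$ is that the hypothesis $s_{ij} = z_{j-i}$ makes the cell weight $m_{ij}^{-s_{ij}}$ depend only on the content $j-i$, which is precisely the translation-invariance the column-by-column lattice-path framework exploits.

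First I would rewrite
\[
\zeta_\lambda(\pmb{s}) = \sum_{M \in {\rm SSYT}(\lambda)} \prod_{(i,j) \in \lambda} m_{ij}^{-z_{j-i}}
\]
as a weighted enumeration of $m'$-tuples of non-intersecting directed paths in a suitable lattice, associating one path to each column of $\lambda$. The sources $A_j$ and sinks $B_j$ should be placed so that non-intersecting families correspond bijectively to semi-standard fillings of $\lambda$, and so that a vertical step of any path taken at height $m$ and content $c$ contributes the multiplicative weight $m^{-z_c}$. For a single path from $A_i$ to $B_j$ in this scheme, an elementary check shows that its $k = \lambda'_i - i + j$ vertical steps are forced to sit at contents $-j+1, -j+2, \ldots, -j+k$ with strictly increasing heights $0 < m_1 < m_2 < \cdots < m_k$; summing the resulting weight over all such tuples yields exactly
\[
\zeta_{EZ,k}(z_{-j+1}, z_{-j+2}, \ldots, z_{-j+k}),
\]
with the degenerate cases $k = 0$ and $k < 0$ producing $1$ and $0$ respectively, in accordance with the stated convention. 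LGV then promotes the signed sum over all (possibly intersecting) path families to the determinant on the right-hand side of \eqref{JT-formula}, and the standard tail-swap involution at the first crossing collapses every intersecting family, leaving only the identity-permutation non-intersecting configuration, which sums to $\zeta_\lambda(\pmb{s})$.

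The main obstacle I expect is analytic rather than combinatorial. Classical LGV works for formal power series, but here every entry of the determinant is an honest convergent infinite series, and both the expansion of the determinant as an $m'$-fold sum over path families and the sign-reversing involution require absolute convergence to rearrange freely. The hypothesis $\pmb{s} \in W_\lambda^{\rm diag}$ is what supplies this: the condition $\Re(s_{ij}) > 1$ at corner cells translates into $\Re > 1$ on the outermost argument of each $\zeta_{EZ,k}$ entry terminating at a corner, while $\Re(s_{ij}) \geq 1$ elsewhere gives absolute convergence of the nested sums once the full structure is unwound. Carefully packaging this convergence check so that the finite-tableau LGV cancellations survive verbatim in the infinite Dirichlet-series setting will be the most delicate point.
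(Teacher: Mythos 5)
This statement is quoted from \cite[Theorem 1.1 (2)]{NPY18} and the paper gives no proof of it, so there is no internal argument to compare against; I can only assess your proposal on its own terms. The combinatorial core is sound and standard: encoding each column of $\lambda$ as a lattice path whose vertical steps carry the content-dependent weight $m^{-z_c}$, identifying the single-path generating function from $A_i$ to $B_j$ with $\zeta_{EZ,\lambda'_i-i+j}(z_{-j+1},\ldots)$, and running the Lindstr{\"o}m--Gessel--Viennot tail-swap is exactly the ``ninth variation'' dual Jacobi--Trudi argument, and the strict increase along columns correctly matches the strict inequalities $0<m_1<\cdots<m_r$ in $\zeta_{EZ,r}$. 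The ordinary product of the convergent entries automatically re-expands as the harmonic product, so that point also takes care of itself in the analytic setting.

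The genuine gap is in the convergence step, and you have half-noticed it yourself: you say the outermost argument of each entry ``terminating at a corner'' has real part $>1$, but not every entry terminates at a corner. The hypothesis $\pmb{s}\in W_\lambda^{\rm diag}$ only forces $\Re>1$ at the cells of $C(\lambda)$; when two consecutive columns of $\lambda$ have equal length, the bottom cell of the left one is not a corner, and the corresponding Euler--Zagier entry has an outermost variable with only $\Re\ge 1$ --- for instance $\zeta_{EZ,2}(s,1)$ diverges, so the relevant determinant entry need not converge at all, let alone absolutely. Consequently the $m'$-fold rearrangement and the termwise sign-reversing involution cannot be justified under the stated hypothesis alone. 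The fix is the one recorded in Remark \ref{meroconti}: first prove \eqref{JT-formula} under the additional condition $\Re(s_{\lambda'_i,i})=\Re(z_{-q_i})>1$ for all $1\le i\le m'$ (where every entry converges absolutely and your LGV argument goes through verbatim), and then remove that condition by observing that both sides are meromorphic in $\pmb{z}$ and agree on an open set. As written, your claim that $W_\lambda^{\rm diag}$ ``supplies'' the needed absolute convergence is false, and this analytic continuation step must be added explicitly.
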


On the right-hand side of \eqref{JT-formula}, in the definition of the determinant, 
the product of Euler-Zagier multiple zeta-functions is to be given by the harmonic product.

\begin{remark}\label{meroconti}
In the original statement of the above theorem in \cite{NPY18}, the condition
$\Re(s_{\lambda'_i,i})=\Re (z_{-q_i})>1$ (for $1\leq i\leq m'$) is required.    However, 
as commented in \cite{NPY18}, \eqref{JT-formula} gives the meromorphic continuation of
$\zeta_{\lambda}(\pmb{s})$, as a function in $\pmb{z}$, to the whole $\pmb{z}$-space.
In this sense, we can state Theorem \ref{JT} without the condition 
$\Re(s_{\lambda'_i,i})=\Re (z_{-q_i})>1$.
\end{remark}

Next, the Giambelli formula for $\zeta_{\lambda}({\pmb s})$ proved in \cite{NPY18}
is as follows.

\begin{thm}\label{Giambelli0} 
{\rm (\cite[Theorem 4.5]{NPY18})}
Let  $\lambda$ be a partition whose
Frobenius notation is $\lambda=(p_1, \cdots , p_N | q_1, \cdots, q_N)$.
Assume ${\pmb s}\in W_{\lambda}^{\rm{diag}}$. 
Then we have
\begin{equation}\label{expij}
\zeta_{\lambda}({\pmb s}) = \det(\zeta_{i,j})_{1 \leq i,j \leq N},
\end{equation}
where $\zeta_{i,j}=\zeta_{(p_i+1, 1^{q_j})} ({\bf s}_{i,j}^F)$ with 
${\bf s}_{i,j}^F=
\ytableausetup{boxsize=normal}
  \begin{ytableau}
   z_0 & z_1 & z_2 &\cdots & z_{p_i}\\
   z_{-1}\\
   \vdots \\
    z_{-q_j}
  \end{ytableau}\in W_{(p_i+1, 1^{q_j})}.
$
 \end{thm}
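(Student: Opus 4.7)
The plan is to derive the Giambelli formula from the Jacobi--Trudi formula (Theorem \ref{JT}) via a classical determinantal identity, with care taken over the Frobenius indexing and the content-parametrization. First, I would apply Theorem \ref{JT} to the left-hand side to express $\zeta_{\lambda}({\pmb s})$ as an $m' \times m'$ determinant whose $(i,j)$-entry is $\zeta_{EZ,\lambda'_i-i+j}(z_{-j+1},\ldots,z_{-j+\lambda'_i-i+j})$. Since ${\pmb s} \in W_\lambda^{\rm diag}$ and variables on each diagonal are identified with $z_{j-i}$, each ${\pmb s}_{i,j}^F$ inherits a diagonal/content structure on the hook $(p_i+1,1^{q_j})$; in particular each right-hand entry $\zeta_{i,j}$ lies in $W^{\rm diag}_{(p_i+1,1^{q_j})}$ and Theorem \ref{JT} applies to it as well.

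Next I would apply Theorem \ref{JT} to each hook zeta-function $\zeta_{i,j}$. The conjugate of the hook $(p_i+1,1^{q_j})$ is $(q_j+1,1^{p_i})$, so the Jacobi--Trudi determinant for $\zeta_{i,j}$ has size $(q_j+1)\times(q_j+1)$, and cofactor expansion exploiting the hook shape rewrites $\zeta_{i,j}$ as a signed sum indexed by $0 \le k \le q_j$ of harmonic products of the form $\zeta_{EZ,p_i+k+1}(z_0,\ldots,z_{p_i+k}) \cdot \zeta_{EZ,q_j-k}(z_{-1},\ldots,z_{-(q_j-k)})$. Both sides of \eqref{expij} are thereby realized as explicit signed sums of harmonic products of Euler--Zagier zeta-functions whose argument lists are consecutive runs of the content variables $z_k$.

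The core algebraic step is then a Bazin--Sylvester-type determinantal identity (equivalent, on the combinatorial side, to the Lindstr\"om--Gessel--Viennot lemma applied to the lattice-path model for Schur multiple zeta-functions): after reindexing the rows and columns of the global Jacobi--Trudi matrix via the Frobenius coordinates $(p_i \mid q_j)$, the determinant collapses into $\det(\zeta_{i,j})_{1 \le i,j \le N}$. Because all relevant entries are content-parametrized---i.e.\ the variable attached to each box depends only on $j-i$---the harmonic product of Euler--Zagier zetas obeys exactly the same distributive identities as products of complete homogeneous symmetric functions, so the classical symmetric-function implication ``Jacobi--Trudi $\Rightarrow$ Giambelli'' transfers essentially verbatim.

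The main obstacle I expect is bookkeeping: carefully translating between the Jacobi--Trudi indices $\lambda'_i-i+j$ and the Frobenius indices $(p_i,q_j)$, and tracking the content shifts $z_{-j+k}$ so that the Euler--Zagier factors arising from the hook expansions align with those from the global Jacobi--Trudi determinant. A convenient way to organize this is to factor the argument through the ring of (quasi-)symmetric functions: invoke the classical Giambelli-from-Jacobi--Trudi identity there, and then specialize via the content-parametrized evaluation $h_n \mapsto \zeta_{EZ,n}(z_{a+1},\ldots,z_{a+n})$ for the appropriate starting content $a$, which converts the symmetric-function identity into the desired determinantal identity for $\zeta_\lambda({\pmb s})$.
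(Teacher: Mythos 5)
Your strategy is sound in outline, but it is a genuinely different route from the one the paper takes: it is essentially the route of the original reference \cite{NPY18}, which reduces Theorem \ref{Giambelli0} to the Giambelli identity for Macdonald's ninth variation of Schur functions proved in \cite{NNSY01} --- i.e., exactly your ``Jacobi--Trudi $\Rightarrow$ Giambelli via a Bazin-type minor identity, then specialize the doubly-indexed family to content-parametrized Euler--Zagier zetas under the harmonic product.'' The present paper instead proves the statement (as the $\lambda_*=\lambda^*=\emptyset$ case of Theorem \ref{Giambelli1}, carried out in Subsection 5.1) by the combinatorial method of E{\v g}ecio{\v g}lu and Remmel: expand $\det(\zeta_{i,j})$ as $\sum_{\sigma}\mathrm{sgn}(\sigma)\sum_{T}w(\pmb{s},T)$ over ``$\sigma$-tableaux'' of shape $\lambda$, check that the terms with $\sigma=\mathbbm{1}$ and $T\in{\rm SSYT}(\lambda)$ reproduce $\zeta_{\lambda}(\pmb{s})$ while no SSYT is a $\sigma$-tableau for $\sigma\neq\mathbbm{1}$, and cancel all remaining terms by a weight-preserving, sign-reversing involution (Lemma \ref{cancellation}); the hypothesis $\pmb{s}\in W_{\lambda}^{\rm diag}$ enters precisely to make the entry swaps along diagonals weight-preserving. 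Your route buys brevity by outsourcing the work to Theorem \ref{JT} and a classical determinantal identity; the paper's route buys a self-contained argument that extends directly to the laced tableaux of Theorem \ref{Giambelli1}, which is the paper's actual goal. Two bookkeeping slips to fix if you carry your plan out: with Theorem \ref{JT} as stated (the column-type version, since $\zeta_{EZ,r}$ is the Schur zeta of the column $(1^r)$), the Jacobi--Trudi matrix for the hook $(p_i+1,1^{q_j})$ has size $p_i+1$ (the number of parts of its conjugate $(q_j+1,1^{p_i})$), not $q_j+1$; and in the hook expansion $\sum_k(-1)^k h_{p+1+k}\,e_{q-k}$ one factor is of row type (weakly increasing entries), so it is not an Euler--Zagier zeta $\zeta_{EZ}$ but its row-shaped (star-like) analogue. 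Neither issue derails the approach, but both must be tracked for the minors of the global matrix to match the hook determinants.
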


\begin{remark}
On the right-hand side of \eqref{expij}, to calculate the determinant, it is necessary
to define the products among $\zeta_{i,j}$.    This products are given by the harmonic
products.    In fact, more generally, if $\pmb{s}\in W_{\lambda}^{\rm diag}$ and
$\pmb{s}'\in W_{\lambda'}^{\rm diag}$, then we can define the harmonic product 
$\zeta_{\lambda}(\pmb{s})\ast \zeta_{\lambda'}(\pmb{s}')$ by applying Theorem
\ref{JT} and reducing to the harmonic product among multiple zeta-functions of
Euler-Zagier type.
\end{remark}

It is the aim of the present paper to show two new variants of Theorem \ref{Giambelli0},
and report their applications.

First, in Section \ref{sec-2}, we will state a generalization of Theorem \ref{Giambelli0}
to Schur multiple zeta-functions attached to extended tableaux, named ``laced tableaux'' 
(Theorem \ref{Giambelli1}).
The proof of Theorem \ref{Giambelli1}
will be described in Section \ref{sec-5}.

In \cite{NPY18}, Theorem \ref{Giambelli0} is just reduced to the Giambelli-type formula
given in Nakagawa et al. \cite{NNSY01}.    Our proof of Theorem \ref{Giambelli1} is different;
it is based on a paper of E{\v g}ecio{\v g}lu and Remmel \cite{EgeRem88}.    
Therefore especially
our argument includes an alternative proof of Theorem \ref{Giambelli0}.

The second variant of the Giambelli formula in the present paper
will be shown in Section \ref{sec-3}, 
which is for Schur multiple zeta-functions attached to tableaux of skew type (Theorem 
\ref{Giambelli2}).

In our previous article \cite{MatNak21}, we proved that Schur multiple zeta-functions
attached to tableaux of anti-hook type can be expressed in terms of multiple zeta-functions of
Euler-Zagier type and their star variant (\cite[Theorem 3.2]{MatNak21}), or in terms
of zeta-functions of root systems (\cite[Theorem 4.1]{MatNak21}).
Similar results were also shown in the case of hook type (\cite[Section 3]{MatNakPrep}).
In the latter paper, using the Giambelli formula (Theorem \ref{Giambelli0}), 
the results were further extended to more general content-parametrized Schur multiple
zeta-functions (\cite[Section 4]{MatNakPrep}).
Now, Theorem \ref{Giambelli2} in the present paper gives a Giambelli-type formula for
tableaux of skew type.
Therefore, we can extend the results in \cite{MatNak21}
to the case of content-parametrized Schur multiple zeta-functions attached to tableaux of
more general skew type (see Remark \ref{R3-6}).

In Section \ref{sec-4}, as another application of our theorems,
we will consider the tableaux of the following form:
$$
\ytableausetup{boxsize=normal}
  \begin{ytableau}
   \none & \none & \none & \none & \none &  \\
   \none & \none & \none & \none & \none & \vdots \\
   \none & \none & \none & \none & \none &  \\
   \none & \none & \none &  & \cdots &  \\
   \none & \none & \none & \vdots &  & \vdots \\
     & \cdots &  &  & \cdots &
  \end{ytableau}.
$$
We will apply both Theorem \ref{Giambelli1} and Theorem \ref{Giambelli2} to 
this type of tableaux, and will 
deduce new identities among Schur multiple zeta-functions.

\begin{example}\label{example-1}
For 
$$
{\pmb s}=
\ytableausetup{boxsize=normal}
  \begin{ytableau}
 \none & \none & \none & \alpha_2 \\
  \none & \none & \none &  \alpha_1\\
 \none & \none &  c_0 & c_1  \\
   \beta_2  & \beta_1 & c_{-1} & c_0
  \end{ytableau},
$$
where we call this tableau shape $\theta$. Then we have the following identity:
$$
\zeta_{\theta}({\pmb s})=
\left|
\begin{matrix}
\;
\ytableausetup{boxsize=normal}
  \begin{ytableau}
 \none & \none & \none & \alpha_2 \\
  \none & \none & \none &  \alpha_1\\
 \none & \none &  c_0 & c_1  \\
   \beta_2  & \beta_1 & c_{-1} & \none
  \end{ytableau}
  &
  \ytableausetup{boxsize=normal}
  \begin{ytableau}
  \none & \alpha_2 \\
  \none &  \alpha_1\\
   c_0 & c_1  
   \end{ytableau}\;
  \vspace{2mm}\\
  \ytableausetup{boxsize=normal}
  \begin{ytableau}
  \none & \none &  c_0  \\
   \beta_2  & \beta_1 & c_{-1}
  \end{ytableau}
  &
  \ytableausetup{boxsize=normal}
  \begin{ytableau}
 c_0
  \end{ytableau}  
\end{matrix}
\right|
=
\left|
\begin{matrix}\;
\ytableausetup{boxsize=normal}
  \begin{ytableau}
 \none & \none & \none & \alpha_2 \\
  \none & \none & \none &  \alpha_1\\
 \none & \none &  \none & c_1  \\
   \beta_2  & \beta_1 & c_{-1} & c_0
  \end{ytableau}
  &
  \ytableausetup{boxsize=normal}
  \begin{ytableau}
 \alpha_2 \\
 \alpha_1\\
 c_1  \\
 c_0
   \end{ytableau}
   \; 
  \vspace{2mm}\\
  %
  \ytableausetup{boxsize=normal}
  \begin{ytableau}
   \beta_2  & \beta_1 & c_{-1} & c_0
  \end{ytableau}
    &
  \ytableausetup{boxsize=normal}
  \begin{ytableau}
 c_0
  \end{ytableau}  
\end{matrix}
\right|,
$$
where in the middle- and right-hand sides, each tableau $\widetilde{\pmb s}$ in the determinant means $\zeta_{\widetilde{\lambda}}(\widetilde{\pmb s})$, for short,
with $\widetilde{\lambda}$ being the shape of $\widetilde{\pmb s}$.
Throughout this article, we use this convention if there is no confusion.
\end{example}


\begin{remark}
The arguments and results in the present article can be directly extended to the more
general framework of quasi-symmetric functions. 
\end{remark}

\section{A Giambelli-type formula for Schur multiple zeta-functions
of laced type}\label{sec-2}

Let $\lambda_*$, $\lambda^*$ be two tableaux. 
Let $\lambda$ be another tableau, and now construct a new tableau $\widetilde{\lambda}$ 
by pasting these three tableaux in the following way:

(i) The right-edge of the most north-east box of $\lambda_*$ is pasted to the 
left-edge of the most south-west box of $\lambda$,

(ii) The bottom-edge of the most south-west box of $\lambda^*$ is pasted to the
top-edge of the most north-east box of $\lambda$.

We  denote this $\tilde{\lambda}$ by
$\widetilde{\lambda}=[\lambda_*\mid\lambda\mid\lambda^*]$, and
call it  {\it laced tableau}.
For example, 
the following figure (which indeed looks laced) is the one of laced tableaux, where the  colored part is $\lambda$  in $\widetilde{\lambda}$:
$$
\begin{ytableau}
    \none & \none & \none & \none & \none & & & & \\
    \none & \none & \none & \none & \none & & & \\
    \none & \none & \none & \none & \none & \\
    \none & \none & \none & *(gray)  & *(gray) & *(gray)\\
    \none & \none & \none & *(gray) & *(gray) \\
    &  & & *(gray) \\
    &  &  \\
        &   
  \end{ytableau}
  $$

In particular, if
\begin{align*}
\ytableausetup{boxsize=normal}
\lambda_*=
  \begin{ytableau}
    \none &  & \cdots &  
  \end{ytableau}\;,
\qquad \lambda^*=
  \begin{ytableau}
   \none &     \\
   \none & \vdots  \\  
   \none &
  \end{ytableau}\;,
\end{align*}
then the figure of $\widetilde{\lambda}$ looks like a bird whose body $\lambda$ has two wings
$\lambda_*$ and $\lambda^*$.   This $\widetilde{\lambda}$ is called a tableau of
{\it winged type}, which was first named by E{\v g}ecio{\v g}lu and Remmel (\cite{EgeRem88}).

Let $\pmb{s}\in W_{\lambda}, \pmb{s}_*\in W_{\lambda_*}, \pmb{s}^*\in W_{\lambda^*}$,
and define $\widetilde{\pmb{s}}\in W_{\widetilde{\lambda}}$,
consisting of $\pmb{s}, \pmb{s_*}$ and $\pmb{s^*}$, which we denote
$\widetilde{\pmb{s}}=[\pmb{s}_*\mid\pmb{s}\mid\pmb{s}^*]$.
Our main target in the present paper is the zeta-function
$\zeta_{\widetilde{\lambda}}(\widetilde{\pmb{s}})$.

On the middle part $\lambda$, we use the same notation, and the same
assumption $\pmb{s}\in W_{\lambda}^{\rm diag}$ as in the statement of
Theorem \ref{Giambelli0}.    

Recall $\zeta_{ij}$ and $\pmb{s}_{ij}^F$ in the statement of Theorem \ref{Giambelli0}
(for $1\leq i,j\leq N$).   Let
$\lambda_{ij}=(p_i+1,1^{q_j})$, 
\begin{align*}
\widetilde{\lambda}_{ij}=
\begin{cases}
[\lambda_*\mid \lambda_{11} \mid \lambda^*] & {\rm for}\; i=j=1,\\
[\emptyset\mid \lambda_{1j} \mid \lambda^*] & {\rm for}\; i=1, 1<j\leq N\\
[\lambda_*\mid \lambda_{i1} \mid \emptyset] & {\rm for}\; 1<i\leq N, j=1\\
\lambda_{ij} & {\rm for}\; 1<i\leq N, 1<j\leq N
\end{cases}
\end{align*}
\begin{align*}
\widetilde{\pmb{s}}_{ij}^F=
\begin{cases}
[\pmb{s}_*\mid \pmb{s}_{11}^F \mid \pmb{s}^*] & {\rm for}\; i=j=1,\\
[\emptyset\mid \pmb{s}_{1j}^F \mid \pmb{s}^*] & {\rm for}\; i=1, 1<j\leq N\\
[\pmb{s}_*\mid \pmb{s}_{i1}^F \mid \emptyset] & {\rm for}\; 1<i\leq N, j=1\\
\pmb{s}_{ij}^F & {\rm for}\; 1<i\leq N, 1<j\leq N
\end{cases}
\end{align*}
and define the zeta-function associated with $\widetilde{\lambda}_{ij}$ by
$$
\widetilde{\zeta}_{i,j}=\zeta_{\widetilde{\lambda}_{ij}}
(\widetilde{\pmb{s}}_{ij}^F).
$$

\begin{example}For $\lambda=(4, 3, 2, 1)$, $\lambda_*=(2)$, $\lambda^*=(1,1)$,
$$
{\pmb s}=
\ytableausetup{boxsize=normal}
  \begin{ytableau}
c_0 & c_1 &c_2 & c_3 \\
c_{-1} & c_0 &c_1\\
c_{-2} & c_{-1}\\
c_{-3}
\end{ytableau}\in W_{\lambda}, 
\quad
{\pmb s_*}=
\ytableausetup{boxsize=normal}
  \begin{ytableau}
\beta_2 & \beta_1
\end{ytableau}\in W_{\lambda_*}, 
\quad
{\pmb s^*}=
\ytableausetup{boxsize=normal}
  \begin{ytableau}
\alpha_2\\
\alpha_1
\end{ytableau}\in W_{\lambda^*}, 
$$
where we say $s_{ij}=c_{j-i}$, and
$$
\widetilde{\pmb{s}}=[\pmb{s}_*\mid\pmb{s}\mid\pmb{s}^*]=
\ytableausetup{boxsize=normal}
  \begin{ytableau}
\none & \none & \none & \none & \none & \alpha_2  \\
\none & \none  & \none & \none & \none & \alpha_1  \\
\none & \none & c_0 & c_1 &c_2 & c_3 \\
\none & \none  & c_{-1} & c_0 &c_1\\
\none & \none & c_{-2} & c_{-1}\\
\beta_2 & \beta_1 & c_{-3} 
\end{ytableau}.
$$
This tableau is ``winged type". In Frobenius notation, $\lambda=(3,1|3,1)$ with $N=2$.
\begin{align*}
\widetilde{{\pmb s}_{11}}^F & =[{\pmb s}_{*} | {\pmb s}_{11}^F | {\pmb s}^{*}]=
\ytableausetup{boxsize=normal}
  \begin{ytableau}
\none & \none & \none & \none & \none & \alpha_2  \\
\none & \none  & \none & \none & \none & \alpha_1  \\
\none & \none & c_0 & c_1 &c_2 & c_3 \\
\none & \none  & c_{-1}\\
\none & \none & c_{-2} \\
\beta_2 & \beta_1 & c_{-3} 
\end{ytableau},\\
\widetilde{{\pmb s}_{12}}^F & =[\emptyset\mid \pmb{s}_{12}^F \mid \pmb{s}^*] =
\ytableausetup{boxsize=normal}
  \begin{ytableau}
 \none & \none & \none & \alpha_2  \\
 \none & \none & \none & \alpha_1  \\
 c_0 & c_1 &c_2 & c_3 \\
 c_{-1}\\
\end{ytableau},\\
\widetilde{{\pmb s}_{21}}^F & =[{\pmb s}_* \mid \pmb{s}_{21}^F \mid \emptyset] =
\ytableausetup{boxsize=normal}
  \begin{ytableau}
\none & \none  & c_0 & c_1\\
\none & \none  & c_{-1}\\
\none & \none & c_{-2} \\
\beta_2 & \beta_1 & c_{-3} 
\end{ytableau},\\
\widetilde{{\pmb s}_{22}}^F & =[\emptyset \mid \pmb{s}_{22}^F \mid \emptyset] =
\ytableausetup{boxsize=normal}
  \begin{ytableau}
 c_0 & c_1\\
 c_{-1}\\
\end{ytableau}.\\
\end{align*}
\end{example}

The following is one of the main results in the present paper, which
is the Giambelli formula for $\zeta_{\widetilde{\lambda}}(\widetilde{\pmb s})$.

\begin{thm}\label{Giambelli1} 
Let  $\lambda$ be a partition, and write its  
Frobenius notation as $\lambda=(p_1, \cdots , p_N | q_1, \cdots, q_N)$.
Define $\widetilde{\lambda}$, $\widetilde{\pmb s}$,
$\widetilde{\lambda}_{ij}$, $\widetilde{\pmb s}_{ij}^F$
etc.~as above.    
Assume ${\pmb s}\in W_{\lambda}^{\rm{diag}}$. 
Then we have
\begin{equation}\label{expij-1}
\zeta_{\widetilde{\lambda}}(\widetilde{\pmb s}) = 
\det(\widetilde{\zeta}_{i,j})_{1 \leq i,j \leq N}.
\end{equation}
 \end{thm}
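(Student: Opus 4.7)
The plan is to adapt the combinatorial sign-reversing involution of E\v{g}ecio\v{g}lu and Remmel \cite{EgeRem88} to the Schur multiple zeta-function setting. First, expand the right-hand side of \eqref{expij-1} as
\[
\det(\widetilde{\zeta}_{i,j})_{1\leq i,j\leq N}=\sum_{\sigma\in S_N}\mathrm{sgn}(\sigma)\prod_{i=1}^{N}\widetilde{\zeta}_{i,\sigma(i)},
\]
where, as explained in the remark following Theorem \ref{Giambelli0}, the products are interpreted as harmonic products of Schur multiple zeta-functions (reducible, via Theorem \ref{JT}, to harmonic products of Euler--Zagier series). Each factor $\widetilde{\zeta}_{i,\sigma(i)}=\zeta_{\widetilde{\lambda}_{i,\sigma(i)}}(\widetilde{\pmb s}_{i,\sigma(i)}^F)$ is by definition a sum over semi-standard Young tableaux $M_{i}$ of hook shape $(p_{i}+1,1^{q_{\sigma(i)}})$, possibly decorated by the wing $\lambda^{*}$ (when $i=1$) and/or the wing $\lambda_{*}$ (when $\sigma(i)=1$). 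Unfolding the harmonic products, each summand in the determinant expansion becomes a signed sum over $N$-tuples $(M_{1},\ldots,M_{N})$ of SSYT of these (decorated) hook shapes.

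Next, interpret each tuple as a stack of $N$ hooks placed along the main diagonal of $\widetilde{\lambda}$: the arm of $M_{i}$ lies on row $i$ and its leg lies in column $\sigma(i)$. Because ${\pmb s}\in W_{\lambda}^{\mathrm{diag}}$, every entry's weight depends only on the content $j-i$ of its cell, so transporting an entry along a content-diagonal preserves its weight. One then defines, following \cite{EgeRem88}, a sign-reversing, weight-preserving involution $\Phi$ on the set of all pairs $(\sigma,(M_{1},\ldots,M_{N}))$: locate the smallest index $i$ at which either $\sigma(i)\neq i$ or the arm of $M_{i}$ fails to interlace monotonically with the leg of $M_{\sigma^{-1}(i)}$, and swap the ``tails'' of the two offending hooks at the first position of disagreement. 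Such a swap composes $\sigma$ with a transposition, flipping $\mathrm{sgn}(\sigma)$; the multiset of entries is unchanged, so by the content-only dependence of the weights the total weight is preserved.

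The fixed points of $\Phi$ are exactly the configurations with $\sigma=\mathrm{id}$ whose $N$ hooks interlace without collision; by construction, assembling their arms and legs along the diagonal of $\widetilde{\lambda}$ produces precisely the elements of $\mathrm{SSYT}(\widetilde{\lambda})$, each with the correct weight $1/M^{\widetilde{\pmb s}}$. Summing over fixed points therefore yields $\zeta_{\widetilde{\lambda}}(\widetilde{\pmb s})$, establishing \eqref{expij-1}. As a byproduct, specializing to $\lambda_{*}=\lambda^{*}=\emptyset$ furnishes the alternative proof of Theorem \ref{Giambelli0} announced in the introduction.

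The main obstacle is compatibility of $\Phi$ with the wings $\lambda_{*}$ and $\lambda^{*}$. In any single term of the determinant expansion, exactly one factor carries the $\lambda^{*}$ wing (the one with $i=1$) and exactly one carries the $\lambda_{*}$ wing (the one with $\sigma(i)=1$), so the wings must be transported coherently under the involution. Because the wings are appended \emph{outside} the region where arm--leg collisions can occur inside the body of $\widetilde{\lambda}$, they are carried along passively in most cases; the delicate point is when the swap in $\Phi$ involves the wing-bearing factor itself, where one must check that monotonicity across the junction of the wing with the hook body is preserved. Verifying this junction condition is the main combinatorial subtlety; once it is in place, the unwinged Eğecioğlu--Remmel argument completes the proof verbatim.
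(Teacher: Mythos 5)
Your proposal is correct and follows essentially the same route as the paper: expand the determinant, reinterpret each term as a weight of a $\sigma$-configuration of hooks (assembled in the paper into a single ``$\sigma$-tableau'' satisfying three interlacing conditions), apply the E\v{g}ecio\v{g}lu--Remmel sign-reversing, weight-preserving involution (which uses ${\pmb s}\in W_{\lambda}^{\rm diag}$ exactly as you say), and identify the fixed points with ${\rm SSYT}(\widetilde{\lambda})$, the wings being carried by the $i=1$ and $\sigma(i)=1$ factors. The junction condition you flag as the remaining subtlety is precisely what the paper's Lemma 5.4 isolates (the hypotheses $T^{\mathrm{NE}}<T^{*,\mathrm{SW}}$ and $T_*^{\mathrm{NE}}\leq T^{\mathrm{SW}}$), and the paper likewise leaves the laced-case cancellation to the unwinged argument.
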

 
\begin{example}
For $\lambda=(2,1, 0 | 2, 1, 0)$ in Frobenius notation with $N=3$, any ${\pmb s}_*\in W_{\lambda_*}$ and ${\pmb s}^*\in W_{\lambda^*}$, we have
\begin{equation}\label{Ex210}
\ytableausetup{boxsize=normal}
  \begin{ytableau}
   \none & \none & \none & {\pmb s}^*  \\
   \none & c_{0} & c_{1} &  c_{2} \\
  \none  & c_{-1} & c_{0} & c_{1}  \\ 
 {\pmb s}_*    & c_{-2} & c_{-1} & c_{0} 
  \end{ytableau}
  =
  \left|
  \begin{matrix}
\ytableausetup{boxsize=normal}
  \begin{ytableau}
   \none & \none & \none & {\pmb s}^*  \\
   \none & c_{0}  &  c_{1} &  c_{2} \\
  \none  &  c_{-1}  \\ 
  {\pmb s}_*   & c_{-2}   
  \end{ytableau}

&

\ytableausetup{boxsize=normal}
  \begin{ytableau}
   \none & \none & \none & {\pmb s}^*  \\
   \none & c_{0}  &  c_{1} &  c_{2} \\
  \none  &  c_{-1}  
   \end{ytableau}

&

\ytableausetup{boxsize=normal}
  \begin{ytableau}
   \none & \none & \none & {\pmb s}^*  \\
   \none & c_{0}  &  c_{1} &  c_{2} 
   \end{ytableau}

\\
\\

\ytableausetup{boxsize=normal}
  \begin{ytableau}
   \none & c_{0}  &  c_{1} \\
  \none  &  c_{-1}  \\ 
  {\pmb s}_*   & c_{-2}   
  \end{ytableau}

&

\ytableausetup{boxsize=normal}
  \begin{ytableau}
   \none & c_{0}  &  c_{1} \\
  \none  &  c_{-1}  
   \end{ytableau}

&

\ytableausetup{boxsize=normal}
  \begin{ytableau}
   \none & c_{0}  &  c_{1} 
   \end{ytableau}

\\
\\

\ytableausetup{boxsize=normal}
  \begin{ytableau}
   \none & c_{0}  \\
  \none  &  c_{-1}  \\ 
  {\pmb s}_*   & c_{-2}   
  \end{ytableau}

&

\ytableausetup{boxsize=normal}
  \begin{ytableau}
   \none & c_{0}  \\
  \none  &  c_{-1}  
   \end{ytableau}

&

\ytableausetup{boxsize=normal}
  \begin{ytableau}
   \none & c_{0} 
   \end{ytableau}

\end{matrix}
\right|.
\end{equation}
\end{example}

When $\lambda_*=\lambda^*=\emptyset$, then $\widetilde{\lambda}=\lambda$, 
$\widetilde{\lambda}_{ij}=\lambda_{ij}$, 
$\widetilde{\pmb{s}}_{ij}^F=\pmb{s}_{ij}^F$, 
and $\widetilde{\zeta}_{i,j}=\zeta_{ij}$.
Therefore it is clear that Theorem \ref{Giambelli1} is a generalization of
Theorem \ref{Giambelli0}.

We will prove Theorem \ref{Giambelli1} in Section \ref{sec-5}.
As mentioned in Section \ref{sec-1}, our proof is based on the idea of
E{\v g}ecio{\v g}lu and Remmel \cite{EgeRem88}, different from the idea of the
proof of Theorem \ref{Giambelli0} given in \cite{NPY18}.
Especially
our argument includes an alternative proof of Theorem \ref{Giambelli0}.

\begin{remark}
We can extend Theorem \ref{Giambelli1} to that for Hurwitz type,
which is the case when the denominator of \eqref{1-1} includes shifting parameters.
Indeed, such a generalization to that of winged type has been obtained in \cite{MNHurwitz}.
\end{remark}

\section{A Giambelli-type formula for Schur multiple zeta-functions
of skew type}\label{sec-3}

In this section we give another main result of the present paper.

Recall the definition of Schur multiple zeta-functions of skew type.
A skew Young diagram $\theta$ is a diagram obtained as a set difference of two 
Young diagrams of partitions $\lambda$ and $\mu$, satisfying $\mu\subset\lambda$.
Then the Schur multiple zeta-function attached to $\theta$, which we denote by
$\zeta_{\theta}(\pmb{s})$, is defined similarly as \eqref{1-1}, just $\lambda$
replaced by $\theta$.
We also use the notation $T(\theta,X)$, $C(\theta)$ analogously to
$T(\lambda,X)$, $C(\lambda)$, respectively.

Let ${\it Qsym}$ be the set of all quasi-symmetric functions, which is an algebra
with the harmonic product $*$.    In fact, ${\it Qsym}$ has a Hopf algebra
structure with coproduct $\Delta$ (see Hoffman \cite{Hof15}).
The antipode $\mathscr{S}$ is an automorphism of ${\it Qsym}$ and satisfies 
$\mathscr{S}^2={\rm id}$.
As for the explicit form of $\mathscr{S}$, see \cite[Theorem 3.1]{Hof15}.

The quasi-symmetric function of Schur type attached to $\lambda$ is defined by
\begin{align}\label{3-1}
S_{\lambda}(\boldsymbol{\gamma})=\sum_{M=(m_{ij}) \in {\rm SSYT}(\lambda)}
\prod_{(i,j)\in\lambda}t_{m_{ij}}^{\gamma_{ij}},
\end{align}
where $t_{m_{ij}}$ are variables, and
$\boldsymbol{\gamma}=(\gamma_{ij})\in T(\theta,\mathbb{N})$.

Assume that $\lambda$ is a partition whose
Frobenius notation is $\lambda=(p_1, \cdots , p_N | q_1, \cdots, q_N)$.
If $\boldsymbol{\gamma}=(\gamma_{ij})\in T^{\rm{diag}}(\lambda,\mathbb{N})$,
then we have (see \cite[Theorem 5.4]{NPY18})
\begin{equation}\label{S-expij}
S_{\lambda}(\boldsymbol{\gamma}) = 
\det(S_{(p_i, 1^{q_j})} (\boldsymbol{\gamma}_{ij}^F))_{1 \leq i,j \leq N},
\end{equation}
with
$\boldsymbol{\gamma}_{ij}^F=
\ytableausetup{boxsize=normal}
  \begin{ytableau}
   c_0 & c_1 & c_2 &\cdots & c_{p_i}\\
   c_{-1}\\
   \vdots \\
    c_{-q_j}
  \end{ytableau}\in W_{(p_i, 1^{q_j})},
$
where $c_k$ are defined by $\gamma_{i,j}=c_{j-i}$ (for all $i,j$).
This \eqref{S-expij} is the Giambelli formula for $S_{\lambda}(\boldsymbol{\gamma})$.
Applying $\mathscr{S}$ to the both sides of \eqref{S-expij}, we obtain
\begin{align}\label{SS-expij}
\mathscr{S}(S_{\lambda}(\boldsymbol{\gamma})) = 
\mathscr{S}(\det(S_{(p_i, 1^{q_j})} (\boldsymbol{\gamma}_{ij}^F))_{1 \leq i,j \leq N}).
\end{align}

For any (skew) diagram $\theta$,
by $\theta^{\#}$ we denote the transpose of $\theta$ with respect to the anti-diagonal.
Similarly denote by $\boldsymbol{\gamma}^{\#}$ the anti-diagonal transpose of
$\boldsymbol{\gamma}$.    Then it is known that
\begin{align}\label{3-2}
\mathscr{S}(S_\theta (\boldsymbol{\gamma}))=(-1)^{|\theta|}
S_{\theta^{\#}}(\boldsymbol{\gamma}^{\#})
\end{align}
(see \cite[Theorem 5.9]{NPY18}).
Therefore the left-hand side of \eqref{SS-expij} is 
$(-1)^{|\lambda|}
S_{\lambda^{\#}}(\boldsymbol{\gamma}^{\#})$.
On the other hand, since $\mathscr{S}$ is an automorphism, the right-hand side of
\eqref{SS-expij} is 
\begin{align*}
&=\det(\mathscr{S}(S_{(p_i,1^{q_j})}(\gamma_{ij}^F))_{1\leq i,j\leq N}\\
&=\det((-1)^{p_i+q_j+1}S_{(p_i,1^{q_j})^{\#}}(\gamma_{ij}^{F\#}))_{1\leq i,j\leq N}\\
&=(-1)^{|\lambda|}\det(S_{(p_i,1^{q_j})^{\#}}(\gamma_{ij}^{F\#}))_{1\leq i,j\leq N}.
\end{align*}
Therefore, now, we can rewrite \eqref{SS-expij} as
\begin{align}\label{SSS-expij}
S_{\lambda^{\#}}(\boldsymbol{\gamma}^{\#})=
\det(S_{(p_i,1^{q_j})^{\#}}(\gamma_{ij}^{F\#}))_{1\leq i,j\leq N}.
\end{align}

Let
$$
I_{\theta}=\{\boldsymbol{\gamma}=(\gamma_{ij})\in T(\theta,\mathbb{N})\mid
\gamma_{ij}\geq 2\;{\rm for \;all}\; (i,j)\in C(\theta)\}.
$$
Let $\rho\phi^{-1}$ be the mapping defined in \cite{Hof15} 
(see also \cite[Section 5]{NPY18}), which sends
$S_{\theta}(\boldsymbol{\gamma})$ to $\zeta_{\theta}(\boldsymbol{\gamma})$
when $\boldsymbol{\gamma}\in I_{\theta}$ (see \cite[Lemma 5.1]{NPY18}).
\begin{remark}
We can say that the antipode $\mathscr{S}$ for multiple zeta functions can be defined by 
$\zeta_{\theta^{\#}}:=\rho \phi^{-1}(\mathscr{S}(S_{\theta}))$ as in 
the following commutative diagram:
\begin{equation}
\xymatrix{
S_{\theta}\ar[r]^-{\mathscr{S}}\ar[d]_-{\rho\phi^{-1}}\ar@{}[rd]|{\text{\rotatebox{180}{
$\circlearrowright$}}}&S_{\theta^{\#}}\ar[d]^-{\rho\phi^{-1}}\\
\zeta_{\theta}\ar[r]_-{\text{def}}&\zeta_{\theta^{\#}}
}
\end{equation}
\end{remark}

Applying $\rho\phi^{-1}$ to \eqref{SSS-expij}, we obtain
\begin{align}\label{zeta-ij}
\zeta_{\lambda^{\#}}(\boldsymbol{\gamma}^{\#})=
\rho\phi^{-1}(\det(S_{(p_i,1^{q_j})^{\#}}(\gamma_{ij}^{F\#}))_{1\leq i,j\leq N})
\end{align}
when $\boldsymbol{\gamma}^{\#}\in I_{\lambda^{\#}}$.
Since $\phi$ is an isomorphism (\cite[Theorem 2.2]{Hof15}) and $\rho$ is a 
homomorphism (\cite[p. 352]{Hof15}), we see that $\rho\phi^{-1}$ is also
a homomorphism.   Therefore the right-hand side of \eqref{zeta-ij} is
\begin{align*}
&=\det(\rho\phi^{-1}(S_{(p_i,1^{q_j})^{\#}}(\gamma_{ij}^{F\#})))_{1\leq i,j\leq N}\\
&=\det(\zeta_{(p_i,1^{q_j})^{\#}}(\gamma_{ij}^{F\#}))_{1\leq i,j\leq N}.
\end{align*}
Thus we now conclude the following result, which is the Giambelli-type formula
for Schur multiple zeta-functions of skew type.

\begin{thm}\label{Giambelli2}
Let  $\lambda$ be a partition whose
Frobenius notation is $\lambda=(p_1, \cdots , p_N | q_1, \cdots, q_N)$.
Let $\boldsymbol{\gamma} \in T(\lambda,\mathbb{N})$, and
assume $\boldsymbol{\gamma}^{\#}\in I_{\lambda^{\#}}$.
Then we have
\begin{align}\label{Giambelli2-formula}
\zeta_{\lambda^{\#}}(\boldsymbol{\gamma}^{\#})=
\det(\zeta_{(p_i,1^{q_j})^{\#}}(\gamma_{ij}^{F\#}))_{1\leq i,j\leq N}.
\end{align}
\end{thm}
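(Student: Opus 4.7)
The plan is to transport the Giambelli formula \eqref{S-expij} for the Schur quasi-symmetric function $S_{\lambda}(\boldsymbol{\gamma})$ through two structure-preserving maps on $\mathit{Qsym}$: first the Hopf algebra antipode $\mathscr{S}$, and then the homomorphism $\rho\phi^{-1}$ that specializes Schur quasi-symmetric functions to Schur multiple zeta-values. The target identity \eqref{Giambelli2-formula} is simply the image of \eqref{S-expij} under the composite $\rho\phi^{-1}\circ\mathscr{S}$.

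First I would apply $\mathscr{S}$ to both sides of \eqref{S-expij}. On the left, equation \eqref{3-2} immediately gives $\mathscr{S}(S_{\lambda}(\boldsymbol{\gamma})) = (-1)^{|\lambda|} S_{\lambda^{\#}}(\boldsymbol{\gamma}^{\#})$. On the right, because $\mathscr{S}$ is an algebra automorphism with respect to the harmonic product (which is the product used in expanding the determinant), it commutes with $\det$ and may be pushed inside each entry; applying \eqref{3-2} to each hook $S_{(p_i,1^{q_j})}(\boldsymbol{\gamma}_{ij}^{F})$ produces a signed entry $(-1)^{p_i+q_j+1}S_{(p_i,1^{q_j})^{\#}}(\boldsymbol{\gamma}_{ij}^{F\#})$. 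Since these signs split as $(-1)^{p_i+1}(-1)^{q_j}$, a row$\cdot$column factorization, they pull out of the determinant with combined weight $\prod_{i}(-1)^{p_i+1}\prod_{j}(-1)^{q_j} = (-1)^{\sum_{i}(p_i+q_i+1)} = (-1)^{|\lambda|}$, which cancels the sign produced on the left and yields the intermediate identity \eqref{SSS-expij}.

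Second, I would apply $\rho\phi^{-1}$ to both sides of \eqref{SSS-expij}. Since $\phi$ is an isomorphism and $\rho$ is a homomorphism, $\rho\phi^{-1}$ is itself a ring homomorphism, so it commutes with the determinant and again passes inside the entries. Under the hypothesis $\boldsymbol{\gamma}^{\#}\in I_{\lambda^{\#}}$, \cite[Lemma 5.1]{NPY18} turns the left-hand side into $\zeta_{\lambda^{\#}}(\boldsymbol{\gamma}^{\#})$; applying the same lemma to each entry converts $S_{(p_i,1^{q_j})^{\#}}(\boldsymbol{\gamma}_{ij}^{F\#})$ into $\zeta_{(p_i,1^{q_j})^{\#}}(\boldsymbol{\gamma}_{ij}^{F\#})$, and this gives precisely \eqref{Giambelli2-formula}.

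The main technical point I expect to have to justify carefully is the entry-wise applicability of $\rho\phi^{-1}$: one needs $\boldsymbol{\gamma}_{ij}^{F\#}\in I_{(p_i,1^{q_j})^{\#}}$ in addition to the global assumption $\boldsymbol{\gamma}^{\#}\in I_{\lambda^{\#}}$. This should be inherited because under the content parametrization the corners of each sub-hook $(p_i,1^{q_j})^{\#}$ carry variables that coincide with those at corners of $\lambda^{\#}$, so the $\geq 2$ real-part condition propagates. Apart from this bookkeeping, the proof is a direct diagram-chase in the Hopf algebra $\mathit{Qsym}$, and no new combinatorial input is required beyond the classical Giambelli formula for Schur quasi-symmetric functions.
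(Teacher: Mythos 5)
Your proposal is correct and follows essentially the same route as the paper: apply the antipode to the Giambelli formula \eqref{S-expij} for $S_{\lambda}(\boldsymbol{\gamma})$, use \eqref{3-2} together with the row--column sign factorization $(-1)^{p_i+q_j+1}=(-1)^{p_i+1}(-1)^{q_j}$ to cancel the global $(-1)^{|\lambda|}$, and then push the homomorphism $\rho\phi^{-1}$ through the determinant under the hypothesis $\boldsymbol{\gamma}^{\#}\in I_{\lambda^{\#}}$. Your remark about checking $\boldsymbol{\gamma}_{ij}^{F\#}\in I_{(p_i,1^{q_j})^{\#}}$ entrywise is a point the paper passes over silently, and your content-parametrization justification for it is the right one.
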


\begin{example}
For $\lambda=(4, 3, 3, 2)$ and
$$
{\pmb \gamma}=
\ytableausetup{boxsize=normal}
  \begin{ytableau}
  c_0 & c_1 & c_2 & c_{3}  \\
  c_{-1} & c_{0} & c_{1} \\
 c_{-2}    & c_{-1} & c_{0}  \\ 
 c_{-3}    & c_{-2}  
  \end{ytableau},
$$ applying Theorem \ref{Giambelli2} gives
$$
\ytableausetup{boxsize=normal}
  \begin{ytableau}
   \none & \none & \none & c_{3}  \\
   \none & c_{0} & c_{1} &  c_{2} \\
 c_{-2}    & c_{-1} & c_{0} & c_{1}  \\ 
 c_{-3}    & c_{-2} & c_{-1} & c_{0} 
  \end{ytableau}
  =
  \left|
  \begin{matrix}
\ytableausetup{boxsize=normal}
  \begin{ytableau}
   \none & \none & \none & c_{3}  \\
   \none &  \none &  \none &  c_{2} \\
  \none  &  \none &  \none & c_{1}  \\ 
 c_{-3}    & c_{-2} & c_{-1} & c_{0} 
  \end{ytableau}

& 

\ytableausetup{boxsize=normal}
  \begin{ytableau}
 \none & \none & c_{3}  \\
  \none &  \none &  c_{2} \\
  \none &  \none & c_{1}  \\ 
 c_{-2} & c_{-1} & c_{0} 
  \end{ytableau}

&

\ytableausetup{boxsize=normal}
  \begin{ytableau}
 c_{3}  \\
  c_{2} \\
 c_{1}  \\ 
 c_{0} 
  \end{ytableau}

\\
\\

\ytableausetup{boxsize=normal}
  \begin{ytableau}
  \none  &  \none &  \none & c_{1}  \\ 
 c_{-3}    & c_{-2} & c_{-1} & c_{0} 
  \end{ytableau}

&

\ytableausetup{boxsize=normal}
  \begin{ytableau}
  \none &  \none & c_{1}  \\ 
 c_{-2} & c_{-1} & c_{0} 
  \end{ytableau}

&

\ytableausetup{boxsize=normal}
  \begin{ytableau}
c_{1}  \\ 
 c_{0} 
  \end{ytableau}

\\
\\

\ytableausetup{boxsize=normal}
  \begin{ytableau}
 c_{-3}    & c_{-2} & c_{-1} & c_{0} 
  \end{ytableau}

&

\ytableausetup{boxsize=normal}
  \begin{ytableau}
 c_{-2} & c_{-1} & c_{0} 
  \end{ytableau}

&

\ytableausetup{boxsize=normal}
  \begin{ytableau}
 c_{0} 
  \end{ytableau}
  \end{matrix}
  \right|
  .
$$
\end{example}

\begin{example}
Put $
 \begin{ytableau}
{\pmb \alpha}
  \end{ytableau}
=
 \begin{ytableau}
\alpha_1 & \cdots & \alpha_n 
  \end{ytableau}
$
and
$
 \begin{ytableau}
{\pmb \beta}
  \end{ytableau}
=
 \begin{ytableau}
\beta_m \\ \vdots \\ \beta_1 
  \end{ytableau}
$ with 
$
 \begin{ytableau}
{\pmb \alpha}^{\#}
  \end{ytableau}
=
 \begin{ytableau}
\alpha_1 & \cdots & \alpha_n 
  \end{ytableau}^{\#}
$
and
$
 \begin{ytableau}
{\pmb \beta}^{\#}
  \end{ytableau}
=
 \begin{ytableau}
\beta_m \\ \vdots \\ \beta_1 
  \end{ytableau}^{\#}
$.
For $\lambda=(n+3, 3, 3, \underbrace{1, \cdots, 1}_{m})$ and
$$
{\pmb \gamma}=
\ytableausetup{boxsize=normal}
  \begin{ytableau}
 c_{0} & c_{1} &  c_{2} & {\pmb \alpha} \\
 c_{-1} & c_{0} & c_{1}  \\ 
 c_{-2} & c_{-1} & c_{0} \\
 {\pmb \beta}
  \end{ytableau},
$$
Theorem \ref{Giambelli2} shows
\begin{equation}\label{Exantipode}
\ytableausetup{boxsize=normal}
  \begin{ytableau}
   \none & \none & \none & {\pmb \alpha}^{\#}  \\
   \none & c_{0} & c_{1} &  c_{2} \\
  \none  & c_{-1} & c_{0} & c_{1}  \\ 
 {\pmb \beta}^{\#}   & c_{-2} & c_{-1} & c_{0} 
  \end{ytableau}
  =
  \left|
  \begin{matrix}
\ytableausetup{boxsize=normal}
  \begin{ytableau}
   \none & \none & \none & {\pmb \alpha}^{\#}  \\
   \none& \none& \none&  c_{2} \\
   \none & \none & \none &  c_{1}  \\ 
  {\pmb \beta}^{\#}  & c_{-2}   &  c_{-1}  & c_{0}
  \end{ytableau}

&

\ytableausetup{boxsize=normal}
  \begin{ytableau}
 \none & {\pmb \alpha}^{\#}  \\
 \none&  c_{2} \\
 \none &  c_{1}  \\ 
  c_{-1}  & c_{0}
  \end{ytableau}

&

\ytableausetup{boxsize=normal}
  \begin{ytableau}
 {\pmb \alpha}^{\#}  \\
  c_{2} \\
  c_{1}  \\ 
 c_{0}
  \end{ytableau}

\\
\\

\ytableausetup{boxsize=normal}
  \begin{ytableau}
   \none & \none & \none &  c_{1}  \\ 
  {\pmb \beta}^{\#}   & c_{-2}   &  c_{-1}  & c_{0}
  \end{ytableau}

&

\ytableausetup{boxsize=normal}
  \begin{ytableau}
 \none &  c_{1}  \\ 
  c_{-1}  & c_{0}
  \end{ytableau}

&

\ytableausetup{boxsize=normal}
  \begin{ytableau}
c_{1}  \\ 
 c_{0}
  \end{ytableau}

\\
\\

\ytableausetup{boxsize=normal}
  \begin{ytableau}
  {\pmb \beta}^{\#}   & c_{-2}   &  c_{-1}  & c_{0}
  \end{ytableau}

&

\ytableausetup{boxsize=normal}
  \begin{ytableau}
  c_{-1}  & c_{0}
  \end{ytableau}

&

\ytableausetup{boxsize=normal}
  \begin{ytableau}
 c_{0}
  \end{ytableau}

\end{matrix}
\right|
\end{equation}
\end{example}

\begin{remark}
We might extend Theorem \ref{Giambelli2} to that of laced type. But we do not discuss it here.
\end{remark}

\begin{remark}\label{R3-6}
In the previous paper \cite[Theorem 4.1]{MatNak21}, we gave an expression of the Schur multiple zeta-functions of anti-hook type in terms of modified zeta-functions of root systems.
We only describe the results here and will leave the detailed definition of each function to the reader.
Let $\theta=
\underbrace{
\ytableausetup{boxsize=normal}
  \begin{ytableau}
   \none & \none & \none &   \\
   \none &  \none &  \none & \vdots \\
  \none  & \none  & \none  &   \vdots \\ 
     & \cdots & \cdots &  
  \end{ytableau}
}_{k+1}
$
and
${\frak u}={\frak u}(k, \ell)=({\bf u}_1, {\bf u}_2, \cdots, {\bf u}_{k+\ell+1})$.
Then 
for $${\pmb s}\in W_{\theta}^{\circ}
=
\left\{{\pmb s}=(s_{ij})\in T(\theta,\mathbb{C})\, 
 \;| \;
 \Re(s_{ij})> 1 \;\text{for all}\; (i,j)\in \theta 
\right\},
$$ 
we have
\begin{eqnarray}\label{previous41}
 \zeta_{\theta}({\pmb s})  = 
 \zeta_{k+\ell+1, {k+1}}^{\bullet}({\frak u}, A_{k+\ell+1})
+
\sum_{\mu=0}^{k-1}\sum_{\nu=0}^{\ell-1}(-1)^{\mu+\nu}
Z_{\mu,\nu}({\pmb s};(k,\ell)),
\label{thm2}
\end{eqnarray}
where $\zeta_{k+\ell+1, {k+1}}^{\bullet}({\frak u}, A_{k+\ell+1})$ is a modified zeta-function of root systems of type $A$
and
$Z_{\mu,\nu}$ is a double series which can be regarded as an analogue of ``Weyl group multiple Dirichlet series" in the sense of Bump \cite{Bump}.
Then the $(i, j)$-component of the matrix on the right-hand side of \eqref{Giambelli2-formula}, and hence also the left-hand side of
 \eqref{Giambelli2-formula}, are 
written in terms of \eqref{previous41}.
\end{remark}

\section{Some new identities among Schur multiple zeta-functions}\label{sec-4}


As mentioned in Section \ref{sec-1}, in this section we consider the
tableaux of winged type whose middle part is of rectangle shape (see the figure)
$$
\ytableausetup{boxsize=normal}
  \begin{ytableau}
   \none & \none & \none & \none & \none &  \\
   \none & \none & \none & \none & \none & \vdots \\
   \none & \none & \none & \none & \none &  \\
   \none & \none & \none &  & \cdots &  \\
   \none & \none & \none & \vdots &  & \vdots \\
     & \cdots &  &  & \cdots &
  \end{ytableau}\quad,
$$
and we deduce new identities among Schur multiple zeta-functions. This shape is of winged
type and so we can apply Theorem \ref{Giambelli1} in Section \ref{sec-2}, while this shape
can be also recognized as a special case of skew type discussed in Section \ref{sec-3}, hence
we may apply Theorem \ref{Giambelli2}.    
Equalizing the two expressions obtained from those two theorems, we find that the following
theorem holds.

\begin{thm}\label{Giambelli4} 
Let  $\lambda$ be a partition, and write its  
Frobenius notation as $\lambda=(p_1, \cdots , p_N | q_1, \cdots, q_N)$.
Assume the Young diagram of shape $\lambda$ is the form of rectangle shape and
$\widetilde{\lambda}$ is of winged type whose middle part is $\lambda$.
Then the Young diagram of shape $\widetilde{\lambda}^{\#}$ is of standard (non-skewed) type. 
Let $\boldsymbol{\gamma}\in T(\widetilde{\lambda}^{\#}, {\bf N})$ and put 
$\widetilde{\pmb s}=\boldsymbol{\gamma}^{\#}\in T(\widetilde{\lambda}, {\bf N})$.
Then
using the notation $\widetilde{\zeta}_{ij}$ as in Theorem \ref{Giambelli1} and ${\gamma}_{ij}$ as in Theorem \ref{Giambelli2}, we have
\begin{equation}\label{expij-1}
\det(\widetilde{\zeta}_{i,j})_{1 \leq i,j \leq N}=
\det(\zeta_{(p_i,1^{q_j})^{\#}}(\gamma_{ij}^{F\#}))_{1\leq i,j\leq N}.
\end{equation}
\end{thm}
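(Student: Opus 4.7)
The plan is to evaluate the single quantity $\zeta_{\widetilde{\lambda}}(\widetilde{\pmb s})$ in two different determinantal ways and then equate the resulting expressions. Since $\widetilde{\lambda}$ is a laced tableau with middle part $\lambda$, Theorem \ref{Giambelli1} applies directly and immediately yields the left-hand side of \eqref{expij-1}:
$$\zeta_{\widetilde{\lambda}}(\widetilde{\pmb s}) = \det(\widetilde{\zeta}_{i,j})_{1\leq i,j\leq N}.$$

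Next, I would use the rectangularity of $\lambda$ to verify that the antidiagonal transpose $\widetilde{\lambda}^{\#}$ is of standard (non-skew) type. Writing the cells of $\widetilde{\lambda}$ as the disjoint union of the rectangle $\lambda$, the horizontal strip $\lambda_*$ pasted at the SW corner, and the vertical strip $\lambda^*$ pasted at the NE corner, and applying the antidiagonal flip $(i,j)\mapsto(K-j+1,\,K-i+1)$ for a suitable ambient $K$, a direct inspection shows that the rectangle is sent to a rectangle placed against the NW corner, while $\lambda^*$ and $\lambda_*$ are sent respectively to an extension of the topmost row and the leftmost column. Rectangularity of $\lambda$ is exactly what rules out interior ``notches'' in the transposed shape, so the resulting cell set is a genuine Young diagram.

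Once $\widetilde{\lambda}^{\#}$ is recognized as a standard partition, Theorem \ref{Giambelli2} can be applied to it. Taking $\boldsymbol{\gamma}$ as in the hypothesis so that $\boldsymbol{\gamma}^{\#}=\widetilde{\pmb s}$ and using the involution $(\widetilde{\lambda}^{\#})^{\#}=\widetilde{\lambda}$, the theorem produces
$$\zeta_{\widetilde{\lambda}}(\widetilde{\pmb s}) = \det\bigl(\zeta_{(p_i,1^{q_j})^{\#}}(\gamma_{ij}^{F\#})\bigr)_{1\leq i,j\leq N},$$
the right-hand side of \eqref{expij-1}. Equating the two displayed expressions then gives the theorem.

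The main obstacle is the bookkeeping needed to align the two Frobenius indexings. A priori, Theorem \ref{Giambelli1} indexes its determinant by the Frobenius diagonal of $\lambda$, whereas Theorem \ref{Giambelli2} applied to $\widetilde{\lambda}^{\#}$ is indexed by the Frobenius diagonal of $\widetilde{\lambda}^{\#}$. One must check that in the rectangular case both diagonals have the same length $N$, and that the hooks $(p_i,1^{q_j})^{\#}$ on the right can be read off cleanly from $\lambda$, $\lambda_*$, and $\lambda^*$; concretely, $|\lambda^*|$ augments the arm in the row $i=1$ and $|\lambda_*|$ augments the leg in the column $j=1$, while all other rows and columns of the Frobenius decomposition pass through unchanged. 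A secondary technical point, handled as in Remark \ref{meroconti}, is that Theorem \ref{Giambelli2}'s convergence condition $\widetilde{\pmb s}\in I_{\widetilde{\lambda}}$ is not part of the hypothesis of the present theorem and must be waived by meromorphic continuation.
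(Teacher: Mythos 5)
Your proposal is correct and follows exactly the paper's argument: the identity is obtained by computing $\zeta_{\widetilde{\lambda}}(\widetilde{\pmb s})$ once via Theorem \ref{Giambelli1} (winged/laced type) and once via Theorem \ref{Giambelli2} (skew type, after observing that rectangularity of $\lambda$ is what makes $\widetilde{\lambda}^{\#}$ a straight shape), and then equating the two determinants. You in fact supply more detail than the paper, which disposes of the proof in one sentence (``Equalizing the two expressions obtained from those two theorems, we find that the following theorem holds''), whereas you also address the partition check for $\widetilde{\lambda}^{\#}$, the alignment of the two Frobenius indexings, and the convergence caveat.
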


The simplest example of this theorem was already given in Section \ref{sec-1}
(see Example \ref{example-1}).
Here we add some more examples.

\begin{example}
If $
\ytableausetup{boxsize=normal}
  \begin{ytableau}
 {\pmb s}^{*}
  \end{ytableau}
=\begin{ytableau}
 {\pmb \alpha}^{\#}
  \end{ytableau}
$ and
$
\ytableausetup{boxsize=normal}
  \begin{ytableau}
 {\pmb s}_{*}
  \end{ytableau}
=\begin{ytableau}
 {\pmb \beta}^{\#}
  \end{ytableau}
$ in \eqref{Ex210}, by comparing this with \eqref{Exantipode}, we get the following identity:
$$
  \left|
  \begin{matrix}
\ytableausetup{boxsize=normal}
  \begin{ytableau}
   \none & \none & \none & {\pmb \alpha}^{\#}  \\
   \none & c_{0}  &  c_{1} &  c_{2} \\
  \none  &  c_{-1}  \\ 
  {\pmb \beta}^{\#}   & c_{-2}   
  \end{ytableau}

&

\ytableausetup{boxsize=normal}
  \begin{ytableau}
   \none & \none & \none & {\pmb \alpha}^{\#}  \\
   \none & c_{0}  &  c_{1} &  c_{2} \\
  \none  &  c_{-1}  
   \end{ytableau}

&

\ytableausetup{boxsize=normal}
  \begin{ytableau}
   \none & \none & \none & {\pmb \alpha}^{\#}  \\
   \none & c_{0}  &  c_{1} &  c_{2} 
   \end{ytableau}

\\
\\

\ytableausetup{boxsize=normal}
  \begin{ytableau}
   \none & c_{0}  &  c_{1} \\
  \none  &  c_{-1}  \\ 
  {\pmb \beta}^{\#}   & c_{-2}   
  \end{ytableau}

&

\ytableausetup{boxsize=normal}
  \begin{ytableau}
   \none & c_{0}  &  c_{1} \\
  \none  &  c_{-1}  
   \end{ytableau}

&

\ytableausetup{boxsize=normal}
  \begin{ytableau}
   \none & c_{0}  &  c_{1} 
   \end{ytableau}

\\
\\

\ytableausetup{boxsize=normal}
  \begin{ytableau}
   \none & c_{0}  \\
  \none  &  c_{-1}  \\ 
  {\pmb \beta}^{\#}   & c_{-2}   
  \end{ytableau}

&

\ytableausetup{boxsize=normal}
  \begin{ytableau}
   \none & c_{0}  \\
  \none  &  c_{-1}  
   \end{ytableau}

&

\ytableausetup{boxsize=normal}
  \begin{ytableau}
   \none & c_{0} 
   \end{ytableau}

\end{matrix}
\right|
  =
  \left|
  \begin{matrix}
\ytableausetup{boxsize=normal}
  \begin{ytableau}
   \none & \none & \none & {\pmb \alpha}^{\#}  \\
   \none& \none& \none&  c_{2} \\
   \none & \none & \none &  c_{1}  \\ 
  {\pmb \beta}^{\#}   & c_{-2}   &  c_{-1}  & c_{0}
  \end{ytableau}

&

\ytableausetup{boxsize=normal}
  \begin{ytableau}
 \none & {\pmb \alpha}^{\#}  \\
 \none&  c_{2} \\
 \none &  c_{1}  \\ 
  c_{-1}  & c_{0}
  \end{ytableau}

&

\ytableausetup{boxsize=normal}
  \begin{ytableau}
 {\pmb \alpha}^{\#}  \\
  c_{2} \\
  c_{1}  \\ 
 c_{0}
  \end{ytableau}

\\
\\

\ytableausetup{boxsize=normal}
  \begin{ytableau}
   \none & \none & \none &  c_{1}  \\ 
  {\pmb \beta}^{\#}   & c_{-2}   &  c_{-1}  & c_{0}
  \end{ytableau}

&

\ytableausetup{boxsize=normal}
  \begin{ytableau}
 \none &  c_{1}  \\ 
  c_{-1}  & c_{0}
  \end{ytableau}

&

\ytableausetup{boxsize=normal}
  \begin{ytableau}
c_{1}  \\ 
 c_{0}
  \end{ytableau}

\\
\\

\ytableausetup{boxsize=normal}
  \begin{ytableau}
  {\pmb \beta}^{\#}   & c_{-2}   &  c_{-1}  & c_{0}
  \end{ytableau}

&

\ytableausetup{boxsize=normal}
  \begin{ytableau}
  c_{-1}  & c_{0}
  \end{ytableau}

&

\ytableausetup{boxsize=normal}
  \begin{ytableau}
 c_{0}
  \end{ytableau}

\end{matrix}
\right|
$$
\end{example}
\begin{example}
Applying Theorem \ref{Giambelli1} to ${\pmb \gamma}=\ytableausetup{boxsize=normal}
  \begin{ytableau}
   \none & \none & \none & {\pmb \alpha}  \\
   \none & c_{0} & c_{1} &  c_{2} \\
  \none  & c_{-1} & c_{0} & c_{1}  \\ 
\none   & c_{-2} & c_{-1} & c_{0} \\
 {\pmb \beta}    & c_{-3} & c_{-2} & c_{-1} 
  \end{ytableau}
$ and Theorem \ref{Giambelli2} to ${\pmb \gamma}^{\#}$ gives following:
$$
  \left|
  \begin{matrix}
\ytableausetup{boxsize=normal}
  \begin{ytableau}
   \none & \none & \none & {\pmb \alpha}  \\
   \none & c_{0}  &  c_{1} &  c_{2} \\
  \none  &  c_{-1}  \\ 
\none  & c_{-2}\\   
  {\pmb \beta}   & c_{-3}   
  \end{ytableau}

&

\ytableausetup{boxsize=normal}
  \begin{ytableau}
   \none & \none & \none & {\pmb \alpha}  \\
   \none & c_{0}  &  c_{1} &  c_{2} \\
  \none  &  c_{-1}  \\
    \none  &  c_{-2}  
   \end{ytableau}

&

\ytableausetup{boxsize=normal}
  \begin{ytableau}
   \none & \none & \none & {\pmb \alpha}  \\
   \none & c_{0}  &  c_{1} &  c_{2}\\
     \none  &  c_{-1}   
   \end{ytableau}

\\
\\

\ytableausetup{boxsize=normal}
  \begin{ytableau}
   \none & c_{0}  &  c_{1} \\
  \none  &  c_{-1}  \\ 
\none  & c_{-2}   \\
    {\pmb \beta}   & c_{-3}   
  \end{ytableau}

&

\ytableausetup{boxsize=normal}
  \begin{ytableau}
   \none & c_{0}  &  c_{1} \\
  \none  &  c_{-1}  \\
    \none  &  c_{-2}  
   \end{ytableau}

&

\ytableausetup{boxsize=normal}
  \begin{ytableau}
   \none & c_{0}  &  c_{1}\\
     \none  &  c_{-1}   
   \end{ytableau}

\\
\\

\ytableausetup{boxsize=normal}
  \begin{ytableau}
   \none & c_{0}  \\
  \none  &  c_{-1}  \\ 
\none  & c_{-2}\\   
  {\pmb \beta}   & c_{-3}   
    \end{ytableau}

&

\ytableausetup{boxsize=normal}
  \begin{ytableau}
   \none & c_{0}  \\
  \none  &  c_{-1}  \\
\none  & c_{-2}   
   \end{ytableau}

&

\ytableausetup{boxsize=normal}
  \begin{ytableau}
   \none & c_{0} \\
      \none & c_{-1} 

   \end{ytableau}

\end{matrix}
\right|
  =
  \left|
  \begin{matrix}
\ytableausetup{boxsize=normal}
  \begin{ytableau}
   \none & \none & \none & {\pmb \alpha}  \\
   \none& \none& \none&  c_{2} \\
   \none & \none & \none &  c_{1}  \\ 
   \none & \none & \none &  c_{0}  \\ 
  {\pmb \beta}   & c_{-3}   &  c_{-2}  & c_{-1}
  \end{ytableau}

&

\ytableausetup{boxsize=normal}
  \begin{ytableau}
 \none & {\pmb \alpha}  \\
 \none&  c_{2} \\
 \none &  c_{1}  \\ 
 \none &  c_{0}  \\ 
   c_{-2}  & c_{-1}
  \end{ytableau}

&

\ytableausetup{boxsize=normal}
  \begin{ytableau}
 {\pmb \alpha}  \\
  c_{2} \\
  c_{1}  \\ 
 c_{0}\\
 c_{-1}
  \end{ytableau}

\\
\\

\ytableausetup{boxsize=normal}
  \begin{ytableau}
   \none & \none & \none &  c_{1}  \\ 
   \none & \none & \none &  c_{0}  \\ 
     {\pmb \beta}   & c_{-3}   &  c_{-2}  & c_{-1}
  \end{ytableau}

&

\ytableausetup{boxsize=normal}
  \begin{ytableau}
 \none &  c_{1}  \\ 
 \none &  c_{0}  \\ 
   c_{-2}  & c_{-1}
  \end{ytableau}

&

\ytableausetup{boxsize=normal}
  \begin{ytableau}
c_{1}  \\ 
 c_{0}\\
  c_{-1}
  \end{ytableau}

\\
\\

\ytableausetup{boxsize=normal}
  \begin{ytableau}
  \none & \none & \none & c_0\\
  {\pmb \beta}   & c_{-3}   &  c_{-2}  & c_{-1}
  \end{ytableau}

&

\ytableausetup{boxsize=normal}
  \begin{ytableau}
  \none & c_0 \\
  c_{-2}  & c_{-1}
  \end{ytableau}

&

\ytableausetup{boxsize=normal}
  \begin{ytableau}
 c_{0}\\
 c_{-1}
  \end{ytableau}

\end{matrix}
\right| \quad.
$$
\end{example}

\section{Proof of Theorem \ref{Giambelli1}}\label{sec-5}

Let $\lambda$ be a partition.   We first recall the Frobenius notation of $\lambda$
introduced in Section \ref{sec-1}.    In terms of the corresponding Young diagram, 
$p_i$ is the number of boxes from $(i,i+1)$-th to $(i,\lambda_i)$-th, 
and $D(p_i)$ the set of all those boxes.
Similarly, $q_i$ the
number of boxes from $(i+1,i)$-th to $(\lambda_i',i)$-th and $D(q_i)$ the set of all
those boxes.
Putting $p=p(\lambda)=(p_1,\ldots,p_N)$ and
$q=q(\lambda)=(q_1,\ldots,q_N)$, we may write $\lambda=(p|q)$.
Further we put
\begin{align*}
D(p)=\bigcup_{1\leq i\leq N}D(p_i),\quad D(q)=\bigcup_{1\leq i\leq N} D(q_i),
\quad B=\bigcup_{1\leq i\leq N}B_i,
\end{align*}
where by $B_i$ we denote the $i$-th box on the main diagonal.

For example, in the case of the Young diagram of shape $\lambda=(3,3,2,1)$
$$
  \ydiagram[]{3,3,2,1}\; ,
$$
the right of the northeast boundary of the main diagonal is
$\alpha(\lambda)=(2,1)$, depicted as 
$
\ydiagram{2,1+1}
$\; ,
while the lower of the southwest boundary of the main diagonal is $\beta(\lambda)=(3,1)$, 
which is depicted as
$
\ydiagram{1,2,1}
$\; , and so the Frobenius notation of this example is $((2,1)|(3,1))$.

\begin{remark}
A slightly different notation is used in \cite{EgeRem88}, where $q_i$ is
defined as $q_i=\lambda_i'-i+1$; that is, the number of boxes from $(i,i)$-th to
$(\lambda_i',i)$-th.
\end{remark}

\subsection{Single-laced type}
We first aim to prove Theorem \ref{Giambelli1} in the single-laced case, that is $\lambda^*=\lambda_*=\emptyset$ in $\widetilde{\lambda}$. 
Let ${\pmb s}\in W_{\lambda}^{\rm{diag}}$.    Our aim is to show
\begin{align}\label{non-winged}
\zeta_{\lambda}({\pmb s})=\det(\zeta_{i,j}({\pmb s}_{i,j}^F))_{1\leq i,j\leq N}.
\end{align}

Let ${\frak S}_N$ be the $N$-th symmetric group. 
The right-hand side of \eqref{non-winged} is
\begin{align}\label{firststep}
&\det(\zeta_{i,j}({\pmb s}_{i,j}^F))_{1\leq i,j\leq N}
=\sum_{\sigma\in {\frak S}_N}{\rm sgn}(\sigma)\prod_{k=1}^N \zeta_{k, \sigma(k)}
({\pmb s}_{k, \sigma(k)}^F)\\
&\quad =\sum_{\sigma\in {\frak S}_N}{\rm sgn}(\sigma)\prod_{k=1}^N \sum_{M_k(\sigma)\in 
{\rm{SSYT}}(p_k | q_{\sigma(k)})}w({\pmb s}_{k,\sigma(k)}^F, M_k(\sigma)),\notag
\end{align}
by the definition \eqref{1-1}.

For a given permutation $\sigma\in {\frak S}_N$ and a Young tableau 
$T=(t_{i,j})$ of shape $\lambda$, 
we call $T$ a $\sigma$-tableau if it satisfies the following conditions:

(I) The entries of $T$ are weakly increasing along the rows in $D(p)$.

(II) The entries of $T$ are strictly increasing down the columns in $D(q)\cup B$.

(III) $t_{\sigma(i), \sigma(i)}\leq t_{i, i+1}$ for $i=1, 2, \ldots, d$ whenever $i+1\leq \lambda_i$. 

\noindent
Let $T$ be a $\sigma$-tableau, and define
\begin{align}\label{sigma-tableau}
T_k(\sigma)=
  \begin{array}{|c|c|c|c|}
 \hline
 t_{\sigma(k),\sigma(k)} & t_{k,k+1} & \cdots & t_{k,k+p_k}\\
 \hline
 t_{\sigma(k),\sigma(k)+1}\\
 \cline{0-0}
 \vdots\\
 \cline{0-0}
 t_{\sigma(k),\sigma(k)+q_{\sigma(k)}}\\
 \cline{0-0}
  \end{array}
  \; .
\end{align}
We sometimes write $T_k(\sigma)=T^F_{k, \sigma(k)}$.

%

\begin{lem}\label{sigmatab}
Let $T=(t_{ij})$ be a $\sigma$-tableau of shape $\lambda=(p | q)$. 
Then 
$T_k(\sigma)\in {\rm SSYT}(p_k | q_{\sigma(k)})$ for $1\leq k\leq N$,
and
$$
w({\pmb s}, T)=\prod_{k=1}^N w({\pmb s}_{k,\sigma(k)}^F, T_k(\sigma)).
$$
\end{lem}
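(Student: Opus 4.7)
The plan is to verify the two assertions separately, both by unpacking the definitions; the lemma is essentially bookkeeping once the conditions (I)--(III) defining a $\sigma$-tableau are correctly lined up with the rows and columns of each hook piece $T_k(\sigma)$.

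\textbf{Shape and SSYT conditions for $T_k(\sigma)$.} By construction $T_k(\sigma)$ has shape $(p_k+1,1^{q_{\sigma(k)}})$, matching $\lambda_{k,\sigma(k)}$. I would then check the semi-standard conditions. The first row reads
$$
t_{\sigma(k),\sigma(k)},\; t_{k,k+1},\; t_{k,k+2},\;\ldots,\; t_{k,k+p_k};
$$
the weak inequalities $t_{k,k+1}\le t_{k,k+2}\le\cdots\le t_{k,k+p_k}$ follow from condition (I) since the involved boxes lie in $D(p_k)$, and the linking inequality $t_{\sigma(k),\sigma(k)}\le t_{k,k+1}$ is exactly condition (III) applied at $i=k$. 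The first column consists of $t_{\sigma(k),\sigma(k)}$ followed by $t_{\sigma(k)+1,\sigma(k)},\ldots,t_{\sigma(k)+q_{\sigma(k)},\sigma(k)}$ (the leg in the $\sigma(k)$-th principal diagonal), and strict increase down this column is immediate from condition (II), as the whole column sits in $B\cup D(q_{\sigma(k)})$.

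\textbf{Weight factorization.} Since ${\pmb s}\in W_\lambda^{\rm diag}$ we have $s_{ij}=z_{j-i}$, so
$$
w({\pmb s},T)=\prod_{(i,j)\in\lambda}t_{ij}^{-z_{j-i}},
$$
and the box set $\lambda$ partitions as $B\sqcup D(p)\sqcup D(q)$. On the other side, the content assignment in ${\pmb s}_{k,\sigma(k)}^F$ gives exponent $z_0$ at the corner, $z_r$ along the arm at position $r$, and $z_{-r}$ along the leg at depth $r$, so that
$$
w({\pmb s}_{k,\sigma(k)}^F, T_k(\sigma)) = t_{\sigma(k),\sigma(k)}^{-z_0}\cdot\prod_{r=1}^{p_k}t_{k,k+r}^{-z_r}\cdot\prod_{r=1}^{q_{\sigma(k)}}t_{\sigma(k)+r,\sigma(k)}^{-z_{-r}}.
$$
Multiplying over $k=1,\ldots,N$ and using that $\sigma$ is a bijection of $\{1,\ldots,N\}$, the corner factors collect into $\prod_{i=1}^N t_{ii}^{-z_0}$ (all of $B$), the arm factors collect into $\prod_{(i,j)\in D(p)}t_{ij}^{-z_{j-i}}$, and the leg factors collect into $\prod_{(i,j)\in D(q)}t_{ij}^{-z_{j-i}}$, which together reconstitute $w({\pmb s},T)$.

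The main (and only) conceptual point is that condition (III) is designed precisely to glue the corner of each hook $T_k(\sigma)$, which sits at the $\sigma(k)$-th diagonal box, to the start of the arm sitting in the $k$-th row; without it the SSYT property would fail across the corner when $\sigma(k)\ne k$. Once this is noted, everything else is a direct translation of conditions (I)--(III) and the fact that $\sigma$ permutes the diagonal/leg indices.
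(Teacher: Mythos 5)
Your proof is correct and follows essentially the same route as the paper's: conditions (I)--(III) give the row, column, and corner inequalities for each hook $T_k(\sigma)$, and the weight identity comes from partitioning the boxes of $\lambda$ into $B\sqcup D(p)\sqcup D(q)$ and regrouping by $k$ via the bijectivity of $\sigma$. Your explicit check that the content parametrization $s_{ij}=z_{j-i}$ matches the exponents $z_0,z_r,z_{-r}$ assigned in ${\pmb s}_{k,\sigma(k)}^F$ is a point the paper leaves implicit, but it is the same argument.
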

\begin{proof}
By the condition (I) of  $\sigma$-tableau, $t_{k,k+1} \leq \cdots \leq  t_{k,k+\alpha_k}$, 
and by the condition (II),
$ t_{\sigma(k),\sigma(k)}<
 t_{\sigma(k),\sigma(k)+1}<\cdots
 t_{\sigma(k),\sigma(k)+\beta_{\sigma(k)}}
$.
Further
by the condition (III) of $\sigma$-tableau, $t_{\sigma(k), \sigma(k)}\leq t_{k,k+1}$. 
So $T_k(\sigma)$ is SSYT.

By the definition of $w({\pmb s}, T)$, we have
\begin{align}
&w({\pmb s}, T)=\prod_{(i,j)\in {\lambda}}\frac{1}{t_{i,j}^{s_{i,j}}}
=
\prod_{(i,j)\in D(p)}\frac{1}{t_{ij}^{s_{i,j}}}\prod_{(i,j)\in D(q)\cup B}
\frac{1}{t_{ij}^{s_{i,j}}}\notag\\
&=\prod_{(i,j)\in D(p_1)}\frac{1}{t_{ij}^{s_{ij}}}\cdots
\prod_{(i,j)\in D(p_N)}\frac{1}{t_{ij}^{s_{ij}}}
\prod_{(i,j)\in D(q_{\sigma(1)})\cup B_{\sigma(1)}}\frac{1}{t_{i,j}^{s_{i,j}}}
\cdots
\prod_{(i,j)\in D(q_{\sigma(N)})\cup B_{\sigma(N)}}\frac{1}{t_{i,j}^{s_{i,j}}}\notag\\
&=\left(\prod_{(i,j)\in D(p_1)}\frac{1}{t_{i,j}^{s_{i,j}}}
\prod_{(i,j)\in D(q_{\sigma(1)})\cup B_{\sigma(1)}}\frac{1}{t_{i,j}^{s_{i,j}}}\right)
\cdots
\left(\prod_{(i,j)\in D(p_N)}\frac{1}{t_{i,j}^{s_{i,j}}}
\prod_{(i,j)\in D(q_{\sigma(N)})\cup B_{\sigma(N)}}\frac{1}{t_{i,j}^{s_{i,j}}}\right)\notag\\
&=\prod_{k=1}^N w({\pmb s}_{k, \sigma(k)}^F, T_k(\sigma)).\label{keisan}
\end{align}
\end{proof}


Conversely, for any given 
$T_k(\sigma)\in {\rm SSYT}(p_k | q_{\sigma(k)})$ ($1\leq k\leq N$),
we may construct a $\sigma$-tableau $T$ by combining those $T_k(\sigma)$ as in the
above proof.    
Therefore, putting
$${\mathbb X}:=\{(\sigma, T) | \sigma\in {\frak S}_N, T : \sigma \mbox{-tableau of shape} \; \lambda\},
$$
from \eqref{firststep} and Lemma \ref{sigmatab} we find

\begin{align*}
\det(\zeta_{i,j}({\pmb s}_{i,j}^F))_{1\leq i,j\leq N}
=\sum_{\sigma\in {\frak S}_d}{\rm sgn}(\sigma) \sum_{T : \sigma{\rm -tableau}}w({\pmb s}, T)
=\sum_{(\sigma, T)\in {\mathbb X}}{\rm sgn}(\sigma) w({\pmb s}, T).
\end{align*}
We denote the right-hand side of the above by $W_{\mathbb X}$.

Let $T\in {\rm SSYT}(\lambda)$.   Then $T$ is obviously an $\mathbbm{1}$-tableau, where
$\mathbbm{1}$ is the identity permutation.
However $T$ is never a $\sigma$-tableau for any $\sigma\neq \mathbbm{1}$.
In fact, if $T$ is a $\sigma$-tableau, then 
$t_{\sigma(i),\sigma(i)}\leq t_{i,i+1}$ for any $i$.
If $\sigma\neq\mathbbm{1}$, then there exists an $i$ for which $j=\sigma(i)>i$, so
$t_{j,j}\leq t_{i,i+1}$.   However, since $T$ is SSYT, it should be
$t_{i,i+1}\leq t_{j-1,j}<t_{j,j}$, which is a contradiction.

We devide
$\mathbb{X}={\mathbb X}^{\dagger}\cup {\mathbb X}^{\dagger\dagger}$, 
where ${\mathbb X}^{\dagger}:=\{(\mathbbm{1}, T) | T\in {\rm SSYT}({\lambda})\}$
and ${\mathbb X}^{\dagger\dagger}={\mathbb X}\setminus{\mathbb X}^{\dagger}$.
Then $T$ is not SSYT for any element 
$(\sigma,T)\in {\mathbb X}^{\dagger\dagger}$.
Since
\begin{align*}
W_{{\mathbb X}^{\dagger}}:&=
\displaystyle{\sum_{(\mathbbm{1}, T)\in {\mathbb X}^{\dagger}}
{\rm sgn}(\mathbbm{1})w({\pmb s}, T)}
=\sum_{T\in {\rm SSYT}({\lambda})}w({\pmb s}, T)=\zeta_{(\alpha | \beta)},
\end{align*}
to prove \eqref{non-winged}
it is sufficient to show that $W_{\mathbb X}=W_{{\mathbb X}^{\dagger}}$. In other words,
it is enough to show that 
\begin{align}\label{daggerdagger}
W_{{\mathbb X}^{\dagger\dagger}}:=
\sum_{(\sigma, T)\in {\mathbb X}^{\dagger\dagger}}{\rm sgn}(\sigma)w({\pmb s}, T)=0.
\end{align}
Now we proceed to show 
\begin{lem}\label{cancellation}
For any $(\sigma,T)\in {\mathbb X}^{\dagger\dagger}$, 
there exists another element $(\sigma',T')\in {\mathbb X}^{\dagger\dagger}$ such that
\begin{align}\label{daggerzero}
{\rm sgn}(\sigma)w({\pmb s},T)+{\rm sgn}(\sigma')w({\pmb s},T')=0.
\end{align}
\end{lem}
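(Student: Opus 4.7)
The plan is to exhibit a sign-reversing, weight-preserving, fixed-point-free involution $\iota : \mathbb{X}^{\dagger\dagger} \to \mathbb{X}^{\dagger\dagger}$. Pairing each $(\sigma, T)$ with its image $(\sigma', T') = \iota(\sigma, T)$ then immediately yields \eqref{daggerzero}, and summing over the pairs gives \eqref{daggerdagger}, completing the proof of \eqref{non-winged}. The construction follows the Lindstr\"om--Gessel--Viennot-type swap employed by E\v{g}ecio\v{g}lu and Remmel \cite{EgeRem88} for the Schur-function Giambelli identity, adapted to the present hook decomposition of $\lambda$.

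First I would pinpoint a canonical obstruction in $T$. Since $T$ is a $\sigma$-tableau that is not SSYT, and since Lemma \ref{sigmatab} guarantees that each individual hook $T_k(\sigma)$ is itself an SSYT, every SSYT-violation of $T$ must occur across the boundary between two distinct hooks $T_k(\sigma)$ and $T_\ell(\sigma)$. Fix a canonical scan order on the cells of $\lambda$ (say column by column from left to right, and within each column from top to bottom), and declare the first scanned cell witnessing either a weak-row or a strict-column failure to be the \emph{bad cell} $c = c(\sigma, T)$; let $k < \ell$ be the indices of the two hooks meeting at $c$.

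Next I would define the swap $\iota(\sigma, T) = (\sigma', T')$ by setting $\sigma' = \sigma \cdot (k\, \ell)$ and exchanging the portions of the two hooks lying ``beyond'' $c$---concretely, swapping the leg-segments of columns $\sigma(k)$ and $\sigma(\ell)$ from $c$ onward (with an analogous arm-swap if $c$ lies in an arm). The crucial point here is that the assumption $\pmb{s} \in W_\lambda^{\mathrm{diag}}$ forces $w(\pmb{s}, T)$ to depend only on the multiset of entries placed on each content diagonal $j - i$, and the swap is designed so that entries are interchanged within matching contents. This gives $w(\pmb{s}, T') = w(\pmb{s}, T)$ while ${\rm sgn}(\sigma') = -{\rm sgn}(\sigma)$. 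One then verifies that $T'$ still satisfies conditions (I)--(III) relative to $\sigma'$ (trivially above $c$, and by construction beyond $c$) and is still not SSYT, so $(\sigma', T') \in \mathbb{X}^{\dagger\dagger}$.

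The final and most delicate step is that $\iota$ is involutive: the canonical scan applied to $(\sigma', T')$ must return the same bad cell $c$, so that $\iota(\sigma', T') = (\sigma, T)$. This is the main technical obstacle of the argument; it requires a careful matching between the direction of the scan, the tie-breaking rule for selecting among possibly several violations, and the precise geometry of the swap (arm--leg, leg--leg, and boundary cases near the main diagonal). In particular one must rule out the possibility that the scan on the image configuration finds an \emph{earlier} bad cell, which would break the pairing. Once this bookkeeping is in place, the pairing $(\sigma, T) \leftrightarrow \iota(\sigma, T)$ cancels $W_{\mathbb{X}^{\dagger\dagger}}$ to zero and Lemma \ref{cancellation}, hence \eqref{non-winged}, follows.
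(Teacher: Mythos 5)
Your overall strategy---a sign-reversing, weight-preserving pairing in the spirit of E{\v g}ecio{\v g}lu and Remmel, with weight preservation coming from the fact that $\pmb{s}\in W_\lambda^{\rm diag}$ makes $w(\pmb{s},\cdot)$ depend only on which entries sit on which content diagonal---is exactly the approach the paper takes. But as written the proposal defers precisely the steps that constitute the proof. The paper's argument splits the failure of $T$ to be SSYT into three cases: a row violation inside the arm region $D(p)$, a column violation inside $D(q)\cup B$, and a violation on the north-east boundary of the main diagonal (a comparison between a diagonal entry $t_{k+1,k+1}$ and the arm entry $t_{k,k+1}$). Your uniform prescription ``swap the portions of the two hooks beyond the first bad cell and compose $\sigma$ with the corresponding transposition'' covers the first two cases (where the swap is along content diagonals, $t_{i,h}\leftrightarrow t_{i+1,h+1}$, resp.\ $t_{h,j}\leftrightarrow t_{h+1,j+1}$), but it does not describe the third: there the correct partner is $(\sigma\circ(i,j),\,T)$ with $\sigma(i)=k$, $\sigma(j)=k+1$ and $T$ \emph{unchanged}---no entries are moved at all, only the assignment of legs to hooks changes. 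A ``tail swap at the bad cell'' in that configuration is not even well defined, and the literal reading of your swap (exchanging leg segments of two columns horizontally) would not preserve contents. Note also that the transposition enters on different sides in the two entry-moving cases ($\sigma'=\sigma\circ(i,i+1)$ for a row violation, $\sigma'=(j,j+1)\circ\sigma$ for a column violation), which your single formula $\sigma'=\sigma\cdot(k\,\ell)$ glosses over.

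The second substantive gap is that you assert, rather than verify, that $(\sigma',T')\in\mathbb{X}^{\dagger\dagger}$. Checking that $T'$ is a $\sigma'$-tableau is not ``trivial above $c$ and by construction beyond $c$'': condition (III) couples the diagonal entries $t_{\sigma(i),\sigma(i)}$ to the arm entries $t_{i,i+1}$ through the permutation, and verifying it after the swap requires the explicit inequalities (e.g.\ the maximality of the swapped segment, and chains such as $t_{k,k}\leq t_{k+1,k+1}\leq t_{k,k+1}\leq t_{i,i+1}$ in the boundary case). This case-by-case verification is the actual content of the lemma. Finally, you are right that to deduce \eqref{daggerdagger} from \eqref{daggerzero} one needs the pairing to be a genuine fixed-point-free involution (mere existence of a partner for each element does not force the sum to vanish), and you correctly flag this as the delicate point---but you then leave it entirely open (``once this bookkeeping is in place''). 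Since that is the step you yourself identify as the main technical obstacle, the proposal cannot be regarded as a proof until it, together with the three-case construction above, is carried out.
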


\begin{proof}
Let $(\sigma,T)\in {\mathbb X}^{\dagger\dagger}$.    Then $T \notin {\rm SSYT}(\lambda)$.
The situation can be divided into the following three cases.

(i) $T$ does not satisfy the SSYT condition in the region $D(p)$;

(ii) $T$ satisfies the SSYT condition in $D(p)$, but does not satisfy it in the
region $D(q)\cup B$;

(iii) $T$ satisfies the SSYT condition both in $D(p)$ and in $D(q)\cup B$, but does not
satisfy it on the north-east boundary of the main diagonal. 

We treat these three cases separately.

{\it Case (i)}.
Assume that the SSYT condition is violated in $D(p)$, 
between the $i$-th row and the $(i+1)$-th
row.    We can find a $k (\geq i+2)$ for which $t_{i,k}\geq t_{i+1,k}$.
We may assume that $k$ is the largest of such.
Let $\sigma'=\sigma\circ (i,i+1)$, and 
define $T'=(t_{ij}')$ by changing the places of the elements $t_{i,h}$ and $t_{i+1,h+1}$ of $T$
($i+1\leq h\leq k-1$), while all the other elements remain the same as $T$.

We show that $T'$ is a $\sigma'$-tableau.
In fact, the $i$-th row of $T'$ is weakly increasing because $t_{i+1,k}\leq t_{i,k}$,
and the $(i+1)$-th row of $T'$ is also weakly increasing because
$t_{i,k-1}\leq t_{i,k}\leq t_{i,k+1}< t_{i+1,k+1}$ (where the last inequality comes from
the maximality of $k$), so the condition (I) is valid.
The condition (II) is obviously valid because $T'$ in $D(q)\cup B$ is 
exactly the same as $T$.

Next we check the condition (III).   Since $\sigma'=\sigma\circ (i,i+1)$, we find
\begin{align*}
&t'_{\sigma'(i),\sigma'(i)}=t'_{\sigma(i+1),\sigma(i+1)}
=t_{\sigma(i+1),\sigma(i+1)}\leq t_{i+1,i+2}=t'_{i,i+1},\\
&t'_{\sigma'(i+1),\sigma'(i+1)}=t'_{\sigma(i),\sigma(i)}
=t_{\sigma(i),\sigma(i)}\leq t_{i,i+1}=t'_{i+1,i+2}
\end{align*}
and, for any $\ell\neq i,i+1$, 
$$
t'_{\sigma'(\ell),\sigma'(\ell)}=t'_{\sigma(\ell),\sigma(\ell)}
=t_{\sigma(\ell),\sigma(\ell)}\leq t_{\ell,\ell+1}=t'_{\ell,\ell+1},
$$
hence (III) is valid.   Therefore $T'$ is a $\sigma'$-tableau.

Since ${\pmb s}\in W_{\lambda}^{\rm diag}$, we see that
$w({\pmb s},T)=w({\pmb s},T')$.    
Noting ${\rm sgn}(\sigma')=-{\rm sgn}(\sigma)$, we obtain \eqref{daggerzero}
in this case.

{\it Case (ii)}.
Assume that the SSYT condition is valid in $D(p)$, but is
violated in $D(q)\cup B$, between the $j$-th column and the $(j+1)$-th
column.    We can find a $k (\geq j+1)$ for which $t_{k,j}> t_{k,j+1}$.
We may assume that $k$ is the largest of such.
Let $\sigma'=(j,j+1)\circ\sigma$, and this time
define $T'=(t_{ij}')$ by changing the places of the elements $t_{h,j}$ and $t_{h+1,j+1}$ 
of $T$
($j\leq h\leq k-1$), while all the other elements remain the same as $T$.

Again we can show that $T'$ is a $\sigma'$-tableau.
The condition (I) is obvious, while the condition (II) can be shown by
$t_{k,j+1}<t_{k,j}$ and
$t_{k-1,j}<t_{k,j}<t_{k+1,j}\leq t_{k+1,j+1}$.
Let us check the condition (III).   For any $k$, we have
\begin{align*}
&\sigma'(k)=
   \begin{cases}
   \sigma(k) & {\rm if} \quad \sigma(k)\neq j,j+1, \\
   j+1 & {\rm if}  \quad \sigma(k)=j, \\
   j & {\rm if}\quad  \sigma(k)=j+1.
   \end{cases}
\end{align*}
Therefore we find
\begin{align*}
t'_{\sigma'(k),\sigma'(k)}=
\begin{cases}
t'_{\sigma(k),\sigma(k)}=t_{\sigma(k),\sigma(k)}
\leq t_{k,k+1}=t'_{k,k+1} & {\rm if} \quad \sigma(k)\neq j,j+1, \\
t'_{j+1,j+1}=t_{j,j}=t_{\sigma(k),\sigma(k)}\leq t_{k,k+1} =t'_{k,k+1}
& {\rm if}  \quad \sigma(k)=j, \\
t'_{j,j}=t_{j+1,j+1}=t_{\sigma(k),\sigma(k)}
\leq t_{k,k+1}=t'_{k,k+1} & {\rm if}\quad  \sigma(k)=j+1,
\end{cases}
\end{align*}
which implies the condition (III).
Therefore $T'$ is a $\sigma'$-tableau.
Since ${\pmb s}\in W_{\lambda}^{\rm diag}$ and
${\rm sgn}(\sigma')=-{\rm sgn}(\sigma)$, \eqref{daggerzero} follows in this case.

{\it Case (iii)}.
Assume that the SSYT condition is valid in $D(p)$, also in $D(q)\cup B$, but is
violated on the north-east boundary of the main diagonal.
Let $k$ be the least such that the violation happens at $t_{k,k+1}$; that is, 
the first violation is either
(A) $t_{k,k}>t_{k,k+1}$, or (B) $t_{k+1,k+1}\leq t_{k,k+1}$.

First we claim that $\sigma(h)=h$ for all $h\leq k-1$ (when $k\geq 2$).
Put $r=\sigma(1)$.    Then $t_{r,r}=t_{\sigma(1),\sigma(1)}\leq t_{1,2}$.
But if $r\geq 2$, then $t_{r,r}\geq t_{2,2}> t_{1,2}$ (because of the SSYT condition in 
$D(q)\cup B$ and the minimality of $k$), and hence $r=1$.
When $k=2$, we are done.   Let $k\geq 3$, and put $s=\sigma(2)$.
We have $t_{s,s}=t_{\sigma(2),\sigma(2)}\leq t_{2,3}$, while if $s\geq 3$ then
$t_{s,s}\geq t_{3,3}> t_{2,3}$, so $s=2$.
Repeating this procedure, we obtain the claim.

From the above claim we see that $\sigma(k)\geq k$.   Therefore
$t_{k,k}\leq t_{\sigma(k),\sigma(k)}\leq t_{k,k+1}$, so the case (A) does not happen.

Consider the case (B).
Define $i, j$ by $\sigma(i)=k$ and $\sigma(j)=k+1$.   By the above claim we have
$i,j\geq k$.   Put $\sigma'=\sigma\circ (i,j)$.   Then
\begin{align}\label{case3B-1}
t_{k+1,k+1}\leq t_{k,k+1}\leq t_{i,i+1}
\end{align}
because of (B) and the SSYT condition in $D(p)$.   Also,
\begin{align}\label{case3B-2}
t_{k,k}\leq t_{k+1,k+1}=t_{\sigma(j),\sigma(j)}\leq t_{j,j+1}
\end{align}
because of the SSYT condition in $D(q)\cup B$.

In this case we choose $T'=T$, and prove that $T$ is also a $\sigma'$-tableau.   
The conditions (I) and (II) are obvious. 
By \eqref{case3B-1}, \eqref{case3B-2} we have
\begin{align*}
&t_{\sigma'(i),\sigma'(i)}=t_{\sigma(j),\sigma(j)}=t_{k+1,k+1}
    \leq t_{i,i+1},\\
& t_{\sigma'(j),\sigma'(j)}=t_{\sigma(i),\sigma(i)}=t_{k,k}
    \leq t_{j,j+1},
\end{align*}
and further, 
$$
t_{\sigma'(\ell),\sigma'(\ell)}=t_{\sigma(\ell),\sigma(\ell)}
\leq t_{\ell,\ell+1}
$$
for any $\ell\neq i,j$.
Therefore the condition (III) is valid, so $T$ is a $\sigma'$-tableau.
Since ${\rm sgn}(\sigma')=-{\rm sgn}(\sigma)$, we obtain \eqref{daggerzero} in this case.
\end{proof}

From Lemma \ref{cancellation} we find that $W_{{\mathbb X}^{\dagger\dagger}}=0$,
which is \eqref{daggerdagger}.
Therefore we now arrive at the assertion \eqref{non-winged}.


\subsection{Laced type}
Let 
\begin{align*}
\widetilde{T}_k(\sigma)=
\ytableausetup{boxsize=normal}
  \begin{ytableau}
   \none & \none & \none & \none & T^*  \\
   \none &  *(lightgray) & *(lightgray) T_k(\sigma)& *(lightgray) & *(lightgray) \\
  \none  & *(lightgray) & \none & \none & \none  \\ 
 T_*    & *(lightgray) &  \none & \none & \none 
  \end{ytableau},
\end{align*}
being the laced-Young tableau of shape $[\lambda_* | (p_k | q_{\sigma(k)}) | \lambda^*]$,
where colored cells are $T_k(\sigma)$ as in \eqref{sigma-tableau}, 
$T^*$ is the Young tableau of shape $\lambda^*$ and 
$T_*$ is that of shape $\lambda_*$.
Note that  
$T^*$ (resp. $T_*$) does not appear  when $k\not=1$ (resp. $\sigma(k)\not=1$).

The right-hand side of \eqref{expij-1} is
\begin{align}\label{firststep}
&
\det(\widetilde{\zeta}_{i,j})_{1 \leq i,j \leq N}
=\sum_{\sigma\in {\frak S}_N}{\rm sgn}(\sigma)\prod_{k=1}^N \widetilde{\zeta}_{k, \sigma(k)}
(\widetilde{\pmb s}_{k, \sigma(k)}^F)\\
&\quad =\sum_{\sigma\in {\frak S}_N}{\rm sgn}(\sigma)\prod_{k=1}^N \sum_{\widetilde{M}_k(\sigma)\in 
{\rm{SSYT}}(\widetilde{p_k | q_{\sigma(k)}})}w(\widetilde{{\pmb s}}_{k,\sigma(k)}^F, \widetilde{M}_k(\sigma)),\notag
\end{align}
by the definition \eqref{1-1}, where 
$$(\widetilde{p_k | q_{\sigma(k)}})=
\begin{cases}
[({p_k | q_{\sigma(k)}})] & k\not=1, \sigma(k)\not=1\\
[\emptyset | ({p_k | q_{\sigma(k)}}) | \lambda^* ]& k=1, \sigma(k)\not=1\\
[\lambda_* | ({p_k | q_{\sigma(k)}}) | \emptyset ] & k\not=1, \sigma(k)=1\\
[\lambda_* | ({p_k | q_{\sigma(k)}}) | \lambda^* ]& k=1, \sigma(k)=1
\end{cases}.$$


%

\begin{lem}\label{sigmatab}
Let $T=(t_{ij})$ be a $\sigma$-tableau of shape $\lambda=(p | q)$.
Let $T^*$ (resp. $T_*$) be a semi-standard tableau of shape $\lambda^*$ (resp. $\lambda_*$). 
Denote $X^{\mathrm{NE}}$ (resp. $X^{\mathrm{SW}}$) by the most north-east (resp. most south-west) element  in $X$ for $X=T$, $T_*$ or $T^*$.
Assume $T^{\mathrm{NE}}<T^{*, {\mathrm{SW}}}$ and $T_{*}^{\mathrm{NE}}\leq T^{{\mathrm{SW}}}$.
Then 
$\widetilde{T}_k(\sigma)\in {\rm SSYT}(\lambda_* |(p_k | q_{\sigma(k)}) | \lambda^*)$ for $1\leq k\leq N$,
and
$$
w(\widetilde{{\pmb s}}, \widetilde{T})=\prod_{k=1}^N w(\widetilde{{\pmb s}}_{k,\sigma(k)}^F, \widetilde{T}_k(\sigma)).
$$
\end{lem}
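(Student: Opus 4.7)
The plan is to reduce this lemma to the single-laced case just established in the subsection, by showing that the additional cells coming from the wings $T^{*}$ and $T_{*}$ only interact with one of the hooks $\widetilde{T}_k(\sigma)$ each, and that the two hypothesized inequalities supply exactly the SSYT conditions needed at the two pasting edges.

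First I would show that each $\widetilde{T}_k(\sigma)$ is semi-standard. By the single-laced Lemma already proved, the central hook $T_k(\sigma)$ is SSYT of shape $(p_k\mid q_{\sigma(k)})$, and by hypothesis $T^{*}$ and $T_{*}$ are themselves SSYT. Hence the only rows and columns whose SSYT condition still needs verification are those that cross a pasting edge. The wing $T^{*}$ is glued onto $\widetilde{T}_k(\sigma)$ exclusively when $k=1$, since only the hook $T_1(\sigma)$ contains the NE corner $(1,\lambda_1)$ of $T$: the horizontal arm of this hook ends precisely at $t_{1,\lambda_1}=T^{\mathrm{NE}}$, which in the laced diagram lies directly below the south-west cell of $T^{*}$, so column-strictness at that join is exactly the hypothesis relating $T^{*,\mathrm{SW}}$ and $T^{\mathrm{NE}}$. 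Analogously, $T_{*}$ is glued on only when $\sigma(k)=1$ (so that the vertical arm of $T_k(\sigma)$ reaches $t_{\lambda_1',1}=T^{\mathrm{SW}}$), and weak-increase along that row is exactly the hypothesis relating $T_{*}^{\mathrm{NE}}$ and $T^{\mathrm{SW}}$. For all other $k$ no new condition arises, and $\widetilde{T}_k(\sigma)$ coincides with $T_k(\sigma)$.

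Second I would verify the weight factorization, which is a direct bookkeeping analogous to the one performed in the previous lemma. Writing out the cell partition
\[
\widetilde{\lambda}=\lambda^{*}\;\sqcup\;\bigsqcup_{k=1}^{N}\bigl(D(p_k)\sqcup D(q_{\sigma(k)})\sqcup B_{\sigma(k)}\bigr)\;\sqcup\;\lambda_{*},
\]
each interior cell of $\lambda$ lies in exactly one hook $T_k(\sigma)$, while the wing-cells of $T^{*}$ and $T_{*}$ are included in $\widetilde{T}_1(\sigma)$ and $\widetilde{T}_{\sigma^{-1}(1)}(\sigma)$ respectively, hence are each counted exactly once on the right-hand side. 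Since $\pmb{s}\in W_\lambda^{\mathrm{diag}}$, the reshuffled corner entry $t_{\sigma(k),\sigma(k)}$ of $T_k(\sigma)$ is weighted by $z_0=s_{k,k}=s_{\sigma(k),\sigma(k)}$, so that the hook-wise weight matches the corresponding factor read off globally from $\widetilde{T}$. Multiplying these matching factors gives the asserted product formula.

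The delicate step is the first one: one must check that the wing $T^{*}$ (resp.\ $T_{*}$) meets exactly one of the hooks, namely $\widetilde{T}_1(\sigma)$ (resp.\ $\widetilde{T}_{\sigma^{-1}(1)}(\sigma)$), and that the hypothesized inequalities are precisely the column-strict (resp.\ row-weak) SSYT comparisons at those joins; once this identification is in place, the remaining verifications reduce to the single-laced lemma and the weight calculation transcribed from its proof.
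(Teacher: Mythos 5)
Your proposal is correct and follows essentially the same route as the paper: the hooks $T_k(\sigma)$ are semi-standard by the single-laced lemma, the two hypothesized inequalities supply the SSYT comparisons at the two pasting edges (which occur only in the hooks with $k=1$ and $\sigma(k)=1$), and the weight factorization is the same cell-partition bookkeeping, with the wing factors $(T^*)^{-\pmb{s}^*}$ and $(T_*)^{-\pmb{s}_*}$ absorbed into $\widetilde{T}_1(\sigma)$ and $\widetilde{T}_{\sigma^{-1}(1)}(\sigma)$ respectively. If anything, your identification of exactly where the joins occur is spelled out more explicitly than in the paper's proof.
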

\begin{proof}
By the condition (I) of  $\sigma$-tableau, $t_{k,k+1} \leq \cdots \leq  t_{k,k+\alpha_k}$, 
and by the condition (II),
$ t_{\sigma(k),\sigma(k)}<
 t_{\sigma(k),\sigma(k)+1}<\cdots
 t_{\sigma(k),\sigma(k)+\beta_{\sigma(k)}}
$.
Further
by the condition (III) of $\sigma$-tableau, $t_{\sigma(k), \sigma(k)}\leq t_{k,k+1}$. 
So $T_k(\sigma)$ is SSYT.
From the assumptions for $T^*$ and $T_*$, and the conditions for the boundary, $\widetilde{T}_k(\sigma)$ is SSYT.

By the definition of $w(\widetilde{\pmb s}, \widetilde{T})$, we have
\begin{align}
&w(\widetilde{\pmb s}, \widetilde{T})=
\frac{1}{T^{\pmb s}(T^*)^{{\pmb s}^*}T_*^{{\pmb s}_*}}
=
\left(
\prod_{(i,j)\in {\lambda}}\frac{1}{t_{i,j}^{s_{i,j}}}\right)
\frac{1}{(T^*)^{{\pmb s}^*}T_*^{{\pmb s}_*}}.\label{keisan2}
\end{align}
The difference between \eqref{keisan} and  \eqref{keisan2} is 
that the term 
$\frac{1}{(T^*)^{{\pmb s}^*}T_*^{{\pmb s}_*}}$ is multiplied.
From
$$w(\widetilde{\pmb s}_{1, \sigma(1)}^F, \widetilde{T}_{1, \sigma(1)}^F) 
=
w({\pmb s}_{1, \sigma(1)}^F, T_{1, \sigma(1)}^F) \frac{1}{(T^*)^{{\pmb s}^*}},
$$
$$w(\widetilde{\pmb s}_{\sigma^{-1}(1), 1}^F, \widetilde{T}^F_{\sigma^{-1}(1), 1})
=
w({\pmb s}_{\sigma^{-1}(1), 1}^F, T^F_{\sigma^{-1}(1), 1}) \frac{1}{(T_*)^{{\pmb s}_*}}
$$
and
$$w(\widetilde{\pmb s}_{k, \sigma(k)}^F, \widetilde{T}_{k, \sigma(k)}^F) 
=
w({\pmb s}_{k, \sigma(k)}^F, T_{k, \sigma(k)}^F) \; ({\mathrm{for}\;} k\not=1, \sigma(k)\not=1),
$$
we get the following 
\begin{align}
w(\widetilde{\pmb s}, \widetilde{T})=\prod_{k=1}^N w({\pmb s}_{k, \sigma(k)}^F, T_k(\sigma)) \frac{1}{(T^*)^{{\pmb s}^*}T_*^{{\pmb s}_*}}
=\prod_{k=1}^N w(\widetilde{\pmb s}_{k, \sigma(k)}^F, \widetilde{T}_k(\sigma)) .
\end{align}
\end{proof}

Similarly with the discussion in the previous subsection,
for any given 
$\widetilde{T}_k(\sigma)\in {\rm SSYT}(\widetilde{p_k | q_{\sigma(k)}})$ ($1\leq k\leq N$),
we may construct a $\sigma$-tableau $\widetilde{T}=(T_* | T | T^*)$ by combining those $\widetilde{T}_k(\sigma)$ :
$$\widetilde{{\mathbb X}}:=\{(\sigma, \widetilde{T}) | \sigma\in {\frak S}_N, \widetilde{T} : \sigma \mbox{-tableau of shape} \; \widetilde{\lambda}\},
$$
where we call $\widetilde{T}$ a $\sigma$-tableau if it satisfies the conditions (I), (II), (III) in the previous subsection for $T$ .
From \eqref{firststep} and Lemma \ref{sigmatab}, we find

\begin{align*}
\det(\widetilde{\zeta}_{i,j}({\pmb s}_{i,j}^F))_{1\leq i,j\leq N}
=\sum_{\sigma\in {\frak S}_d}{\rm sgn}(\sigma) \sum_{\widetilde{T} : \sigma{\rm -tableau}}w({\pmb s}, \widetilde{T})
=\sum_{(\sigma, \widetilde{T})\in \widetilde{{\mathbb X}}}{\rm sgn}(\sigma) w({\pmb s}, \widetilde{T}).
\end{align*}
We denote the right-hand side of the above by $W_{\widetilde{\mathbb X}}$.

Let $\widetilde{T}\in {\rm SSYT}(\widetilde{\lambda})$.   In the same discussion as in the previous subsection, $\widetilde{T}$ is obviously an $\mathbbm{1}$-tableau,
while $\widetilde{T}$ is never a $\sigma$-tableau for any $\sigma\neq \mathbbm{1}$.

We devide
$\widetilde{\mathbb{X}}=\widetilde{{\mathbb X}}^{\dagger}\cup \widetilde{{\mathbb X}}^{\dagger\dagger}$, 
where $\widetilde{{\mathbb X}}^{\dagger}:=\{(\mathbbm{1}, \widetilde{T}) | \widetilde{T}\in {\rm SSYT}(\widetilde{{\lambda}})\}$
and ${\widetilde{\mathbb X}}^{\dagger\dagger}={\widetilde{\mathbb X}}\setminus\widetilde{{\mathbb X}}^{\dagger}$.
Since
\begin{align*}
W_{\widetilde{{\mathbb X}}^{\dagger}}:&=
\displaystyle{\sum_{(\mathbbm{1}, \widetilde{T})\in {\widetilde{\mathbb X}}^{\dagger}}
{\rm sgn}(\mathbbm{1})w({\widetilde{\pmb s}}, \widetilde{T})}
=\zeta_{(\widetilde{{\alpha | \beta}})},
\end{align*}
to prove \eqref{non-winged}
it is sufficient to show that $W_{\widetilde{\mathbb X}}=W_{{\widetilde{\mathbb X}}^{\dagger}}$. Hence,
it is enough to show that 
\begin{align}\label{daggerdagger}
W_{{\widetilde{\mathbb X}}^{\dagger\dagger}}:=
\sum_{(\sigma, \widetilde{T})\in {\widetilde{\mathbb X}}^{\dagger\dagger}}{\rm sgn}(\sigma)w({\widetilde{\pmb s}}, \widetilde{T})=0.
\end{align}
Indeed, the following lemma holds:
\begin{lem}
For any $(\sigma,\widetilde{T})\in {\widetilde{\mathbb X}}^{\dagger\dagger}$, 
there exists another element $(\sigma',\widetilde{T}')\in {\widetilde{\mathbb X}}^{\dagger\dagger}$ such that
\begin{align}
{\rm sgn}(\sigma)w(\widetilde{{\pmb s}},\widetilde{T})+{\rm sgn}(\sigma')w({\widetilde{\pmb s}},\widetilde{T}')=0.
\end{align}
\end{lem}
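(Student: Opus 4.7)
The plan is to reduce this laced cancellation to the single-laced Lemma \ref{cancellation} by exploiting the fact that the wings $T_*$ and $T^*$ remain frozen throughout the argument, so every combinatorial manipulation is confined to the middle tableau $T$ in $\widetilde{T}=[T_*\mid T\mid T^*]$.

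First I would observe that $(\sigma,\widetilde{T})\in\widetilde{\mathbb{X}}^{\dagger\dagger}$ forces $\sigma\neq\mathbbm{1}$. Indeed, membership in $\widetilde{\mathbb{X}}$ means that each piece $\widetilde{T}_k(\sigma)$ extracted from $\widetilde{T}$ is SSYT of the appropriate hook-with-wings shape, so in particular $T_*$ and $T^*$ are SSYT and the boundary compatibilities $T^{\mathrm{NE}}<T^{*,\mathrm{SW}}$ and $T_*^{\mathrm{NE}}\leq T^{\mathrm{SW}}$ hold. If additionally $\sigma=\mathbbm{1}$, the central pieces $T_k(\mathbbm{1})$ also reassemble into an SSYT of shape $\lambda$, and so $\widetilde{T}\in\mathrm{SSYT}(\widetilde{\lambda})$, contradicting $(\sigma,\widetilde{T})\notin\widetilde{\mathbb{X}}^{\dagger}$. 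Once $\sigma\neq\mathbbm{1}$ is known, the same reasoning as in the single-laced case shows that the middle $T$ cannot be SSYT of $\lambda$, so $(\sigma,T)\in\mathbb{X}^{\dagger\dagger}$. I would then apply Lemma \ref{cancellation} to $(\sigma,T)$ to obtain $(\sigma',T')\in\mathbb{X}^{\dagger\dagger}$ satisfying $\mathrm{sgn}(\sigma')=-\mathrm{sgn}(\sigma)$ and $w(\pmb{s},T)=w(\pmb{s},T')$, and set $\widetilde{T}':=[T_*\mid T'\mid T^*]$.

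The hard part is verifying that $\widetilde{T}'$ is still a laced $\sigma'$-tableau, that is, the boundary relations ${T'}^{\mathrm{NE}}<T^{*,\mathrm{SW}}$ and $T_*^{\mathrm{NE}}\leq {T'}^{\mathrm{SW}}$ survive the rearrangement. I would revisit the three cases of the single-laced proof of Lemma \ref{cancellation} and track the north-east corner $t_{1,\lambda_1}$ and south-west corner $t_{\lambda_1',1}$ of $T$. In Case (i) the swap acts within rows $i,i+1$ in the region $D(p)$ at columns bounded by $k\leq\lambda_{i+1}\leq\lambda_1$, so the NE corner in column $\lambda_1\geq k$ sits outside the swapped range, while the SW corner in column $1$ lies outside $D(p)$; in Case (ii) the swap acts within columns $j,j+1\subseteq D(q)\cup B$ at rows $j,\ldots,k\leq\lambda_{j+1}'\leq\lambda_1'$, and since $j\geq 1$ and $\lambda_1\geq N\geq j+1$ no swap can reach row $1$ or row $\lambda_1'$; in Case (iii) one has $T'=T$ and nothing to check. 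Hence $T^{\mathrm{NE}}$ and $T^{\mathrm{SW}}$ are never altered, the laced boundary conditions transfer automatically to $\widetilde{T}'$, and $\widetilde{T}'$ is indeed a $\sigma'$-tableau of shape $\widetilde{\lambda}$.

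To finish, $(\sigma',\widetilde{T}')\in\widetilde{\mathbb{X}}^{\dagger\dagger}$ follows because $(\sigma',T')\in\mathbb{X}^{\dagger\dagger}$ means either $\sigma'\neq\mathbbm{1}$ or $T'\notin\mathrm{SSYT}(\lambda)$, and in either case $\widetilde{T}'\notin\widetilde{\mathbb{X}}^{\dagger}$. The laced version of Lemma \ref{sigmatab} gives the factorization $w(\widetilde{\pmb{s}},\widetilde{X})=w(\pmb{s},X)\,w(\pmb{s}_*,T_*)\,w(\pmb{s}^*,T^*)$ for both $X=T$ and $X=T'$, so combining $w(\pmb{s},T)=w(\pmb{s},T')$ with $\mathrm{sgn}(\sigma')=-\mathrm{sgn}(\sigma)$ yields the desired identity. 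Modulo the corner-tracking bookkeeping of the previous paragraph, the laced cancellation is just a transport of the single-laced cancellation through the fixed wings.
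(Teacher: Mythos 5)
Your overall strategy is exactly the one the paper intends: the paper's proof of this lemma is literally the sentence ``the proof goes through in the same way as in Lemma \ref{cancellation}'', and since the $\sigma$-tableau conditions (I)--(III) only constrain the middle block, repeating that argument amounts to applying Lemma \ref{cancellation} to $T$ with the wings $T_*,T^*$ frozen. Your corner-tracking paragraph is a genuinely worthwhile addition that the paper suppresses: one really does need to check that the swaps of Cases (i) and (ii) never move $t_{1,\lambda_1}$ or $t_{\lambda_1',1}$, and your bounds ($k\leq\lambda_{i+1}\leq\lambda_1$ in Case (i), $k\leq\lambda_{j+1}'\leq\lambda_1'$ in Case (ii)) do establish this, even though your phrase ``no swap can reach row $1$'' is not literally true for $j=1$ (the diagonal box $(1,1)$ is moved; it just is not the north-east corner).

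There is, however, one false step at the start: it is \emph{not} true that $(\sigma,\widetilde{T})\in\widetilde{\mathbb X}^{\dagger\dagger}$ forces $\sigma\neq\mathbbm{1}$. Conditions (I)--(III) for $\sigma=\mathbbm{1}$ do not imply that the reassembled $T$ is semi-standard: they say nothing about weak increase along rows inside $D(q)$ nor about strict increase down columns inside $D(p)$. For $\lambda=(2,2)$ the filling $t_{11}=1$, $t_{12}=5$, $t_{21}=2$, $t_{22}=1$ is an $\mathbbm{1}$-tableau whose two hook pieces are each SSYT, yet $T\notin{\rm SSYT}(\lambda)$; such pairs $(\mathbbm{1},T)$ genuinely populate $\mathbb X^{\dagger\dagger}$ (and their laced versions populate $\widetilde{\mathbb X}^{\dagger\dagger}$), and indeed Cases (i)--(iii) of Lemma \ref{cancellation} are designed precisely to handle them. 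Your proof survives because the claim you actually need is only that $(\sigma,T)\in\mathbb X^{\dagger\dagger}$: when $\sigma\neq\mathbbm{1}$ this is automatic, and when $\sigma=\mathbbm{1}$ the failure of $\widetilde{T}$ to be SSYT must occur inside $T$ (the wings and the two junction inequalities being intact by membership in $\widetilde{\mathbb X}$), so $T\notin{\rm SSYT}(\lambda)$ and $(\mathbbm{1},T)\in\mathbb X^{\dagger\dagger}$ all the same. Replace your first paragraph by this two-case observation and the rest of your argument goes through as written.
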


The proof goes through in the same way as in Lemma \ref{cancellation}, so we do not write down the details.

From the above lemma, we obtain that $W_{\widetilde{{\mathbb X}}^{\dagger\dagger}}=0$.
Therefore we now complete the proof of Theorem \ref{Giambelli1}.


\bigskip
\noindent
\textsc{Kohji Matsumoto}\\
Graduate School of Mathematics, \\
Nagoya University, \\
Furo-cho, Chikusa-ku, Nagoya, 464-8602, Japan \\
 \texttt{kohjimat@math.nagoya-u.ac.jp}\\
and\\
Center for General Education, \\
Aichi Institute of Technology, \\
1247 Yachigusa, Yakusa-cho, Toyota, 470-0392, Japan \\

\medskip

\noindent
\textsc{Maki Nakasuji}\\
Department of Information and Communication Science, Faculty of Science, \\
 Sophia University, \\
 7-1 Kioi-cho, Chiyoda-ku, Tokyo, 102-8554, Japan \\
 \texttt{nakasuji@sophia.ac.jp}\\
and\\
Mathematical Institute, \\
Tohoku University, \\
6-3 Aoba, Aramaki, Aoba-ku, Sendai, 980-8578, Japan \\


\begin{thebibliography}{9} 

\bibitem{Bump} D. Bump,
Introduction: Multiple Dirichlet series,
in ``Multiple Dirichlet Series, L-functions and Automorphic Forms",
Progr. in Math. (D. Bump et al. eds.) {\bf 300} (2012), 1--36.


\bibitem{EgeRem88}{\"O}. N. E{\v g}ecio{\v g}lu and J.B. Remmel,
A combinatorial proof of the Giambelli identity for Schur functions,
Adv. Math. {\bf 70} (1988), 59--86.

\bibitem{Hof15}M. E. Hoffman,
Quasi-symmetric functions and mod $p$ multiple harmonic sums,
Kyushu J. Math. {\bf 69} (2015), 345--366.

\bibitem{Mac95}I. G. Macdonald,
Symmetric Functions and Hall Polynomials,
2nd ed., Oxford, 1995.

\bibitem{MatNak21}K. Matsumoto and M. Nakasuji,
Expressions of Schur multiple zeta-functions of anti-hook type by zeta-functions of
root systems,
Publ. Math. Debrecen {\bf 98} (2021), 345--377.

\bibitem{MatNakPrep}K. Matsumoto and M. Nakasuji,
Expressions of content-parametrized Schur multiple zeta-functions via the 
Giambelli formula,
Funct. Approx. Comment. Math. {\bf 71} (2024), 7--20.

\bibitem{MNHurwitz}K. Matsumoto and M. Nakasuji,
Schur multiple zeta-functions of Hurwitz type,
the Conference Proceedings“ Lie algebras and Number Theory” of ICLANT-2024, (2025).

\bibitem{NNSY01}J. Nakagawa, M. Noumi, M. Shirakawa and Y. Yamada,
Tableau representation for Macdonald's ninth variation of Schur functions,
in ``Physics and Combinatorics 2000'', Proc. Nagoya 2000 Intern. Workshop,
A. N. Kirillov and N. Liskova (eds.), World Sci. Publ., River Edge, NJ, 2000,
pp. 180--195.

\bibitem{NPY18}M. Nakasuji, O. Phuksuwan and Y. Yamasaki,
On Schur multiple zeta functions: A combinatoric generalization of multiple zeta
functions, 
Adv. Math. {\bf 333} (2018), 570--619.


\end{thebibliography}
\end{document}